\numberwithin{equation}{section}
\newtheorem{theorem}{Theorem}[section]
\newtheorem{proposition}[theorem]{Proposition}
\newtheorem{remark}[theorem]{\it Remark}
\newtheorem{Main}{Theorem}
\newcounter{paraga}[section]
\newcommand{\N}{\mathbb{N}}
\newcommand{\Z}{\mathbb{Z}}
\newcommand{\Q}{\mathbb{Q}}
\newcommand{\R}{\mathbb{R}}
\newcommand{\C}{\mathbb{C}}
\begin{document}

\def\MP{\,{<\hspace{-.5em}\cdot}\,}
\def\SP{\,{>\hspace{-.3em}\cdot}\,}
\def\PM{\,{\cdot\hspace{-.3em}<}\,}
\def\PS{\,{\cdot\hspace{-.3em}>}\,}
\def\EP{\,{=\hspace{-.2em}\cdot}\,}
\def\PP{\,{+\hspace{-.1em}\cdot}\,}
\def\PE{\,{\cdot\hspace{-.2em}=}\,}
\def\N{\mathbb N}
\def\C{\mathbb C}
\def\Q{\mathbb Q}
\def\R{\mathbb R}
\def\T{\mathbb T}
\def\A{\mathbb A}
\def\Z{\mathbb Z}
\def\demi{\frac{1}{2}}

\begin{titlepage}
  \title{\LARGE{\textbf{Positive measure of KAM tori for finitely differentiable Hamiltonians}}}
  \author{Abed Bounemoura \\
    CNRS - PSL Research University\\
    (Universit{\'e} Paris-Dauphine and Observatoire de Paris)}
\end{titlepage}

\maketitle

\begin{abstract}
Consider an integer $n \geq 2$ and real numbers $\tau>n-1$ and $l>2(\tau+1)$. Using ideas of Moser, Salamon proved that individual Diophantine tori persist for Hamiltonian systems which are of class $C^l$. Under the stronger assumption that the system is a $C^{l+\tau}$ perturbation of an analytic integrable system, P{\"o}schel proved the persistence of a set of positive measure of Diophantine tori. We improve the last result by showing it is sufficient for the perturbation to be of class $C^{l}$ and the integrable part to be of class $C^{l+2}$.
\end{abstract}

\section{Introduction and main results}\label{s1}

In this paper, we consider small perturbations of integrable Hamiltonian systems, which are defined by a Hamiltonian function of the form
\[ H(q,p)=h(p)+f(q,p), \quad (q,p) \in \T^{n} \times \R^n \]
where $n\geq 2$ is an integer and the norm $|f|=\varepsilon$ (in a suitable space of functions) is a small parameter. The Hamiltonian system associated to this function is then given by
\begin{equation*}
\begin{cases}
\dot{q}=\nabla_p H(q,p)=\nabla h (p)+ \nabla_p f(q,p),\\
\dot{p}=-\nabla_q H(q,p)=-\nabla_q f(q,p)
\end{cases}
\end{equation*}
where $\nabla_q H$ and $\nabla_p H$ denote the vector of partial derivatives with respect to $q=(q_1,\dots,q_n)$ and $p=(p_1,\dots,p_n)$. When $\varepsilon=0$, the system associated to $H=h$ is trivially integrable: all solutions are given by
\[ (q(t),p(t))=(q(0)+t \nabla h (q(0)),p(0))) \] 
and therefore, for each fixed $p \in \R^n$, letting $\omega=\nabla h(p) \in \R^n$, the sets $T_{\omega}=\T^n \times \{p\}$ are invariant tori on which the dynamics is given by the linear flow with frequency $\omega$. The integrable Hamiltonian $h$ is said to be non-degenerate on some ball $B \subseteq \R^n$ if the map $\nabla h : B \rightarrow \R^n$ is a diffeomorphism onto its image $\Omega=\nabla h(B)$. 

\medskip

It is a fundamental result of Kolmogorov that many of these unperturbed quasi-periodic tori persist under any sufficiently small perturbation (\cite{Kol54}), provided the system is real-analytic and the integrable part is non-degenerate. More precisely, Kolmogorov proved that given any vector $\omega \in \Omega$ satisfying the following Diophantine condition:
\begin{equation}\label{dioph}
|k\cdot \omega| \geq \gamma |k|^{-\tau}, \quad k=(k_1,\dots,k_n) \in \Z^n\setminus\{0\}, \quad |k|=|k_1|+\cdots+|k_n| \tag{$D_{\gamma,\tau}$}
\end{equation} 
where $\gamma>0$ and $\tau \geq n-1$ are fixed, the associated torus $T_{\omega}$ persist, being only slightly deformed into another Lagrangian real-analytic quasi-periodic torus $\mathcal{T}_\omega$ with the same frequency. Of course, there are uncountably many many vectors $\omega \in D_{\gamma,\tau}$, and thus the theorem of Kolmogorov gives uncountably many invariant tori. Even more, the set $D_{\gamma,\tau}$ does have positive Lebesgue measure when $\tau>n-1$; the measure of its complement in $\Omega$ (when the latter has a nice boundary) is of order $\gamma$, and hence one expect the set of quasi-periodic invariant tori to have positive Lebesgue measure in phase space. Unfortunately, this does not follow directly from the proof of Kolmogorov, but this was later showed to be correct by Arnold (\cite{Arn63a}) who introduced a different method to prove the theorem of Kolmogorov. Nowadays, the most common strategy to obtain positive measure is to show that the regularity of $\mathcal{T}_\omega$ with respect to $\omega$ is Lipschitz, as this immediately allows to transfer a positive measure set in the space of frequencies into a positive measure set in phase space. We refer to the nice survey~\cite{Pos01} for this Lipschitz dependence in the analytic case.

After Kolmogorov's breakthrough, an important contribution was made by Moser who proved that the Hamiltonian need not be real-analytic (see~\cite{Mos62} for the case of twist maps, which corresponds to an iso-energetic version of the theorem for $n=2$); it is sufficient for the Hamiltonian to be of finite but sufficiently high regularity (of course, the perturbed torus is then only finitely differentiable). Following an idea of Moser (\cite{Mos70}) and a work of P{\"o}schel (\cite{Pos80}), Salamon proved in~\cite{Sal04} that for the persistence of an indiviual tori $T_\omega$ with $\omega \in D_{\gamma,\tau}$, it is sufficient to require the system to be of class $C^l$, with $l>2(\tau+1)$: the torus is then of class $C^{\tau+1}$ and the dynamic on it is $C^1$-conjugated to the linear flow. The regularity of the perturbation can be mildly improved as was shown in~\cite{Alb}, and it may be possible to actually reach the value $l=2(\tau+1)$, but in any event the theorem cannot be true for $l<2(\tau+1)$ as was proved in~\cite{CW13}. Let us point out that for twist maps of the annulus, optimal regularity results follow from the work of Herman (\cite{Her86}). All those results concern the persistence of individual quasi-peridic tori. As for the persistence of a set of positive measure, after an initial result of Lazutkin again for twist maps (\cite{Laz73}) that required an excessive amount of differentiability, the most general result so far is due to P{\"o}schel. In~\cite{Pos82}, he proved the persistence of a set of positive measure under the assumption that the perturbation is of class $C^{l+\tau}$ and the integrable part is real-analytic. Actually, under those assumptions, he proved that the regularity with respect to $\omega$ is $C^1$ in the sense of Whitney (and if the perturbation is more regular, then one has more regularity with respect to $\omega$); this implies in particular Lipschitz dependence. However, the regularity assumptions in the work of P{\"o}schel are definitely stronger than those in the work of Salamon, as not only the perturbation is required to be of class $C^{l+\tau}$ instead of class $C^{l}$ but the integrable part is required to be real-analytic (such analyticity assumption is also present in~\cite{Mos70},~\cite{Pos80} and~\cite{Alb}).

\medskip

It is our purpose here to actually prove that we have persistence of a set of positive measure of quasi-periodic tori provided the perturbation is of class $C^{l}$, as in~\cite{Sal04}, and the integrable part is of class $C^{l+2}$, which is slightly stronger than the assumption in~\cite{Sal04} but still much better than the analyticity assumption of~\cite{Pos82} (we observe, in Remark~\ref{reg} below, that for a fixed $\omega$, we can actually assume $h$ to be of class $C^l$ and not necesarily integrable, and one could recover~\cite{Sal04}).

It is important to point out, however, that we essentially do not improve the main technical result of~\cite{Pos82}. To explain this, let us recall that one can look at the perturbed invariant torus $\mathcal{T}_\omega$ in at least two way: either as the image of an embedding $\Psi_\omega : \T^n \rightarrow \T^n \times B$ into phase space, which moreover conjugates the restricted dynamics to a linear flow, or as the graph of a function $\Gamma_\omega : \T^n \rightarrow B$ defined on the configuration space. The main observation we will use is that the graph is usually more regular than the embedding. This is not new, as for instance in~\cite{Sal04} the embedding is only $C^1$ while the graph is $C^{\tau+1}$, and this is also not surprising. Indeed, the graph $\Gamma_\omega$ only gives the invariant torus, whereas the embedding $\Psi_\omega$ also encodes the dynamical information, as it conjugates the restricted dynamics to a linear flow on the torus, so there is a priori no reason for these two objects to have the same regularity. It is well-known that it is hard to actually construct the invariant graph without prescribing the dynamic on it. Yet we will be able to use this basic observation to show that under our regularity assumption ($f$ is $C^l$ and $h$ is $C^{l+2}$ for $l>2(\tau+1)$), we can construct $\Psi_\omega$ and $\Gamma_\omega$ in such a way that $\Gamma_\omega$ is Lipschitz with respect to $\omega$, without knowing whether this is the case for $\Psi_\omega$. Now P{\"o}schel proved that $\Psi_\omega$ is Lipschitz with respect to $\omega$, provided $f$ is $C^{l+\tau}$ and $h$ is analytic; we could recover (and slightly extend this result) simply by replacing $l$ by $l+\tau$ in our assumption, but clearly this does not improve in any way the measure estimate in phase space of the set of perturbed invariant tori. To summarize this discussion, P{\"o}schel proves that not only the torus $\mathcal{T}_\omega$ but also the restricted dynamics is Lipschitz with respect to $\omega$, whereas we only prove, under weaker and almost optimal regularity assumption (at least concerning the perturbation), the first assertion, which is the one needed to have a set of positive measure in phase space.

\medskip

We now state more precisely our result, and consider
\begin{equation}\label{Ham}
\begin{cases}
H : \T^n \times B \rightarrow \R, \\
H(q,p)=h(p)+f(p,q), \\
h \; \mbox{non-degenerate}.
\end{cases}
\tag{$*$}
\end{equation}
Recall that $\Omega=\nabla h(B)$, we let $\partial \Omega$ its boundary and for fixed constants $\gamma >0$ and $\tau \geq n-1$, we define the following set of Diophantine vectors
\begin{equation*}
\Omega_{\gamma,\tau}=\{\omega \in D_{\gamma,\tau} \cap \Omega \; | \; \mathrm{d}(\omega,\partial\Omega) \geq \gamma\}.
\end{equation*}
As we already explained, when $f=0$, the phase space is trivially foliated by invariant quasi-periodic tori $T_\omega$ which are invariant by $H=h$; since $h$ is non-degenerate, $\nabla h$ has an inverse $(\nabla h)^{-1}$ and the Lipschitz constant of $T_\omega$ with respect to $\omega$, that we shall denote by $\mathrm{Lip}(T)$, is nothing but the Lipschitz constant $\mathrm{Lip}((\nabla h)^{-1})$ of $(\nabla h)^{-1}$. For simplicity, we shall denote the $C^l$ norms of functions by $|\;.\;|_l$, without referring to their domain of definition which should be clear from the context.

\begin{Main}\label{thm1}
Let $H$ be as in~\eqref{Ham} of class $C^l$ with $l>2(\tau+1)$, and assume that
\begin{equation}\label{seuila}
\epsilon=|f|_l \leq c \gamma^2 
\end{equation}
for some small constant $c>0$ which depends only on $n$, $\tau$, $l$ and the norms $|h|_{l+2}$ and $|(\nabla h)^{-1}|_{l}$. Then there exists a set
\[ \mathcal{K}_{\gamma,\tau}=\bigcup_{\omega \in \Omega_{\gamma,\tau}}\mathcal{T}_\omega \subseteq \T^n \times B  \]
where each $\mathcal{T}_\omega$ is an invariant Lagrangian torus of class $C^{\tau+1}$, Lipschitz with respect to $\omega$, and on which the Hamiltonian flow is $C^{1}$-conjugated to the linear flow with frequency $\omega$. Moreover, as $\epsilon$ goes to zero, $\mathcal{T}_\omega$ converges to $T_\omega$ in the $C^{\tau+1}$ topology and $\mathrm{Lip}(\mathcal{T}_\omega)$ converges to $\mathrm{Lip}(T_\omega)$. Finally, we have the measure estimate
\[ \mathrm{Leb}(\T^n \times B  \setminus \mathcal{K}_{\gamma,\tau}) \leq C\gamma^2 \]
provided $\partial \Omega$ is piecewise smooth, where $ \mathrm{Leb}$ denotes the Lebesgue measure and $C>0$ is a large constant. 
\end{Main}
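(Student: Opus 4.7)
The plan is to combine Salamon's $C^l$-KAM approach for individual tori with a careful tracking of the dependence on the frequency $\omega$, exploiting the observation emphasized in the introduction that the invariant graph $\Gamma_\omega$ is more regular than the conjugating embedding $\Psi_\omega$.

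First, I would approximate $h$ and $f$ by analytic functions via Jackson-Moser smoothing: on a geometric sequence of shrinking complex strips of widths $s_j = s_0 K^{-j}$, construct analytic approximations $h_j$ and $f_j$ satisfying $|h - h_j|_{C^k} \leq C s_j^{l+2-k}|h|_{l+2}$ for $0 \leq k \leq l+2$ and $|f - f_j|_{C^k} \leq C s_j^{l-k}|f|_l$ for $0 \leq k \leq l$, together with the standard analytic norm bounds on the strips. The exponents $l+2$ and $l$ reflect the regularity hypotheses on $h$ and $f$ respectively.

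Second, for each $\omega \in \Omega_{\gamma,\tau}$ I would run a superconvergent analytic KAM iteration at frequency $\omega$, using $h_j + f_j$ at step $j$. The new perturbation at step $j+1$ combines the usual quadratic small-divisor error with the approximation error $|(h+f) - (h_j+f_j)|$; matching these via the choice of $s_j$ and using $l > 2(\tau+1)$ produces geometric convergence of the scheme. In the limit one obtains a canonical transformation producing an embedding $\Psi_\omega$ of class $C^1$ that conjugates the flow on $\mathcal{T}_\omega = \Psi_\omega(\T^n)$ to the linear flow of frequency $\omega$, together with an invariant graph $\Gamma_\omega$ of class $C^{\tau+1}$, reproducing for a single $\omega$ the conclusions of~\cite{Sal04}.

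Third, and this is the crux, I would establish Lipschitz dependence of $\omega \mapsto \Gamma_\omega$ without needing it for $\omega \mapsto \Psi_\omega$. At each iteration step one defines an action translation $p_j(\omega) \in B$ implicitly by $\nabla h_j^\star(p_j(\omega)) = \omega$, where $h_j^\star$ is the running normal-form integrable part; differentiating in $\omega$ brings in $(\nabla^2 h_j^\star)^{-1}$, which is controlled thanks to the extra hypothesis $h \in C^{l+2}$. Summing the Lipschitz increments along the geometric scheme yields the desired Lipschitz bound, with constant converging to $\mathrm{Lip}((\nabla h)^{-1})$ as $\varepsilon \to 0$. Crucially, differentiating the full embedding $\Psi_\omega$ would require an extra $\tau$ derivatives of the conjugacy (as in P\"oschel's approach), which is precisely the loss we circumvent by tracking only the graph.

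Finally, the stated measure estimate follows by standard Lipschitz transfer: $\mathrm{Leb}(\Omega \setminus \Omega_{\gamma,\tau}) = O(\gamma)$ when $\partial\Omega$ is piecewise smooth, and the Lipschitz map $\omega \mapsto \Gamma_\omega$ together with the non-degeneracy of $h$ transports this to phase space at the cost of the constants absorbed in $C$ and $c$. The main obstacle is Step 3: one must isolate the graph part of each KAM step from its embedding part, so that the $C^{l+2}$ regularity of $h$ (together with $C^l$ of $f$) suffices for Lipschitz dependence of $\Gamma_\omega$, rather than the analyticity of $h$ invoked by P\"oschel in~\cite{Pos82}.
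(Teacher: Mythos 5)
Your proposal captures the right high-level strategy (analytic smoothing of a $C^l$ perturbation, a geometrically convergent KAM scheme exploiting $l>2(\tau+1)$, and the principle of tracking the invariant graph rather than the conjugating embedding), but it departs from the paper's route in a way that leaves the crucial Step~3 unrealized. The paper does not run a Kolmogorov-style iteration at each fixed $\omega$ with a drifting action $p_j(\omega)$ solving $\nabla h_j^\star(p_j(\omega))=\omega$. Instead it first rewrites $H=h+f$ as a Hamiltonian with $\omega$ as an \emph{independent parameter}, $H(\theta,I,\omega)=e(\omega)+\omega\cdot I+P(\theta,I,\omega)$, with the $C^{l+2}$ hypothesis on $h$ used precisely (and only) so that the Taylor remainder $P_h(I,\omega)$ is $C^l$ jointly in $(I,\omega)$ with small $C^l$ norm after rescaling. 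In this parameter form, Lipschitz dependence on $\omega$ is obtained by keeping the analytic domain in $\omega$ alive through the iteration and applying Cauchy estimates to the telescoping increments, and the inverse of $\nabla^2 h$ never re-enters the scheme; it only appears once, in passing from Theorem~\ref{thm2} back to Theorem~\ref{thm1} via $\mathcal{T}_\omega=\{(\theta,\Gamma_\omega(\theta)+(\nabla h)^{-1}(\varphi(\omega)))\}$.

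The genuine gap is that your Step~3 only controls the Lipschitz dependence of the \emph{center} $p_j(\omega)$ of the torus, not of the full graph, and you give no mechanism for the latter. The paper's mechanism is a specific algebraic cancellation: writing the transformation at step $j+1$ as $(E_{j+1},F_{j+1},G_{j+1},\varphi_{j+1})$ with $U_{j+1}=\mathrm{Id}+E_{j+1}$ and $\Gamma^{j}=G^{j}\circ(U^{j})^{-1}$, one finds the telescoping identity
\[
\Gamma^{j+1}-\Gamma^{j}=(\mathrm{Id}+F^{j}\circ\Delta_{j+1})\cdot G_{j+1}\circ(U^{j+1})^{-1},
\]
whose right-hand side involves only $G_{j+1}$, which enjoys the much better bound $|G_{j+1}|^*\lesssim \varepsilon_j s_j^{-\nu}$ (on a slightly enlarged domain where $(U^{j+1})^{-1}$ takes values) compared with $|E_{j+1}|\lesssim \varepsilon_j s_j^{-\lambda-\tau}$. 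This is exactly the quantitative content of ``the graph is more regular than the embedding'': it is not a consequence of the extra two derivatives of $h$, but of the structure of the symplectic change of variables. Without either the parameter form or an analogue of this cancellation in your direct-iteration setup, the $\tau$-derivative loss of P\"oschel is not circumvented, and your argument reduces to a restatement of the goal rather than a proof.
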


The last part of the statement, concerning the measure estimate, is a well-known consequence of the first part, so we shall not give details (see~\cite{Pos01} for instance). Let us point out that we could have proved that $\mathcal{T}_\omega$ is actually $C^1$ in the sense of Whitney with respect to $\omega$, but we have chosen not to do so (in order not to introduce this notion, as well as anisotropic differentiability, see~\cite{Pos82} for instance).

\section{KAM theorem with parameters}\label{s2}

In this section, following~\cite{Pos01} and~\cite{Pop04}, we will deduce Theorem~\ref{thm1} from a KAM theorem in which the frequencies are taken as independent parameters.

Let us consider the Hamiltonian $H=h+f$ as in~\eqref{Ham}, with $\Omega=\nabla h(B)$ and where we recall that $\Omega_{\gamma,\tau}$ is the set of $(\gamma,\tau)$-Diophantine vectors in $\Omega$ having a distance at least $\gamma$ from the boundary $\partial \Omega$. Now choose $\tilde{\Omega} \subseteq \Omega$ a neighborhood of $\Omega_{\gamma,\tau}$ such that both the distance of $\Omega_{\gamma,\tau}$ to $\partial \tilde{\Omega}$ and $\tilde{\Omega}$ to $\partial \Omega$ is at least $\gamma/2$. Since $\nabla h : B \rightarrow \Omega$ is a diffeomorphism, we can define $\tilde{B}=(\nabla h)^{-1}(\tilde{\Omega})$. For $p_0 \in \tilde{B}$, we expand $h$ in a sufficiently small ball of radius $\rho>0$ around $p_0$: writing $p=p_0+I$ for $I$ in the ball $B_\rho$ of radius $\rho$ centered at zero, we have that $p \in B$ provided
\begin{equation}\label{seuilaa}
\rho \leq (2|h|_{l+2})^{-1}\gamma
\end{equation}
and under this assumption, we can write
\[ h(p)=h(p_0)+ \nabla h(p_0)\cdot I + \int_{0}^{1}(1-t)\nabla^2 h(p_0 +tI)I\cdot I dt. \]
As $\nabla h : \tilde{B} \rightarrow \tilde{\Omega}$ is a diffeomorphism, instead of $p_0$ we can use $\omega=\nabla h(p_0)$ as a new variable, and we write
\[ h(p)=e(\omega)+ \omega\cdot I + P_h(I,\omega) \] 
with 
\begin{equation}\label{Ph}
e(\omega)=h((\nabla h)^{-1}\omega), \quad P_h(I,\omega)=\int_{0}^{1}(1-t)\nabla^2 h((\nabla h)^{-1}\omega +tI)I\cdot Idt.
\end{equation}
Letting $\theta=q$ and 
\[ P_f(\theta,I,\omega)=f(q,p)=f(\theta,p_0+I)= f(\theta,(\nabla h)^{-1}\omega+I), \]
we eventually arrive at
\begin{equation}\label{param}
h(q,p)=H(\theta,I,\omega)=e(\omega)+ \omega\cdot I + P_h(I,\omega)+P_f(\theta,I,\omega)
\end{equation}
where we recall that $I$ varies in a small ball of radius $\rho>0$ around zero. Since $h$ is of class $C^{l+2}$ and $f$ is of class $C^l$, obviously $P_h$ and $P_f$ are of class $C^l$. Moreover, if the $C^l$ norm of $f$ is small then so is the $C^l$ norm of $P_f$ but unfortunately this is not necessarily the case for the $C^l$ norm of $P_h$, not matter how small we choose $\rho$. We will therefore rescale the variable $I$ (another essentially equivalent way to deal with this issue is to use a weighted norm as in~\cite{Pop04}): we consider 
\begin{equation}\label{param2}
\begin{cases}
\rho^{-1}H(\theta,\rho I,\omega)=e_\rho(\omega)+ \omega\cdot I + P_\rho(\theta,I,\omega), \\
e_\rho(\omega)=\rho^{-1}e(\omega), \quad P_\rho(\theta,I,\omega)=\rho^{-1}P_h(\rho I,\omega)+\rho^{-1}P_f(\theta,\rho I,\omega)
\end{cases}
\end{equation}
where now $I$ varies in the unit ball $B_1$ around the origin, and it is easy to observe that the $C^l$ norm of $P_\rho$ will be small provided we choose $\rho$ small. As a side remark, the term $e_\rho$ will be large, but its size is irrelevant to our problem (it simply does not appear in the Hamiltonian vector field associated to $H$). 

\begin{remark}\label{reg}
Observe that we can also write
\[ P_h(I,\omega)=\rho^{-1}h((\nabla h)^{-1}\omega+\rho I)-\rho^{-1}h((\nabla h)^{-1}\omega)-\omega\cdot I. \]
So for a fixed $\omega$, looking at $P_h$ as a function of $I$ only, the above expression shows that it is sufficient for $h$ to be of class $C^l$ for $P_h$ to be of class $C^l$ and with a $C^l$ norm of order $\rho$. Moreover, for a fixed $\omega$, clearly $h$ needs not be integrable (it suffices to consider $h(q,p)$ with $h(q,0)$ constant and $\nabla_p h(q,0)=\omega$, as the fact that $P_h$ is independent of $\theta$ will not play any role in the sequel). For a variable $\omega$, we need at least $h$ to be of class $C^{l+1}$ in order to have $P_h$ of class $C^l$, and we required $h$ to be of class $C^{l+2}$ so that the $C^l$ norm of $P_h$ is of order $\rho$. 
\end{remark}

This discussion leads us to consider the following abstract Hamiltonian:
\begin{equation}\label{Ham2}
\begin{cases}
H : \T^n \times B_1 \times \tilde{\Omega} \rightarrow \R, \\
H(\theta,I,\omega)=e(\omega)+ \omega\cdot I + P(\theta,I,\omega).
\end{cases}\tag{$**$}
\end{equation}
We shall consider $\omega$ as a parameter, so when convenient we will write $H(\theta,I,\omega)=H_\omega(\theta,I)$. Theorem~\ref{thm1} will be obtained from the following statement.

\begin{Main}\label{thm2}
Let $H$ be as in~\eqref{Ham2} of class $C^l$ with $l>2(\tau+1)$. There exists a small constant $c>0$ which depends only on $n,\tau$ and $l$ such that if 
\begin{equation}\label{seuilb}
\varepsilon=|P|_l \leq \tilde{c} \gamma 
\end{equation} 
then the following holds true. There exists a continuous map $\varphi : \Omega_{\gamma,\tau} \rightarrow \tilde{\Omega}$, and for each $\omega \in \Omega_{\gamma,\tau}$, a $C^1$ map $\Psi_\omega=(U_\omega,G_\omega) : \T^n \rightarrow \T^n \times B_1$ such that $\Gamma_\omega=G_\omega \circ U_\omega^{-1} : \T^n \rightarrow B_1$ is of class $C^{\tau+1}$ and:

\noindent
$(1)$ The set
\[ \Psi_\omega(\T^n)=\{(U_\omega(\theta),G_\omega(\theta) \; | \; \theta \in \T^n)\} =\{(\theta,\Gamma_\omega(\theta) \; | \; \theta \in \T^n)\} \]
is an embedded Lagrangian torus invariant by the flow of $H_{\varphi(\omega)}$ with 
\[ X_{H_{\varphi(\omega)}} \circ \Psi_\omega=\nabla \Psi_\omega\cdot \omega \]
and moreover, $|U_\omega-\mathrm{Id}|_1$, $|G_\omega|_1$ and $|\Gamma_\omega|_{\tau+1}$ converge to zero as $\varepsilon$ goes to zero; 

\noindent
$(2)$ The map $\varphi$ and $\Gamma$ are Lipschitz in $\omega$ and moreover, $\mathrm{Lip}(\varphi-\mathrm{Id})$ and $\mathrm{Lip}(\Gamma)$ converge to zero as $\varepsilon$ goes to zero.
\end{Main}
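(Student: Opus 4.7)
The plan is to prove Theorem~\ref{thm2} via the standard finite-differentiable KAM strategy of Moser and Zehnder: I would approximate $P$ by a sequence $P^{(j)}$ of real-analytic perturbations defined on complex strips $\T^n_{s_j}$ of geometrically shrinking widths $s_j\searrow 0$, via an analytic smoothing operator so that a Jackson-type estimate $|P-P^{(j)}|_{C^0(\T^n_{s_j})}\leq C s_j^l |P|_l$ holds while the $C^m$ norms of $P^{(j)}$ remain uniformly controlled for $m\le l$. On the analytic side I would run a KAM iteration with $\omega$ as a parameter, along the lines of P\"oschel's lectures~\cite{Pos01} and the parametric setup of Popov~\cite{Pop04}: at each step a symplectic change $\Phi = X_F^1$ generated by some $F(\theta,I,\omega)$ affine in $I$, together with a parameter shift $\omega\mapsto\omega+\hat\omega(\omega)$, is used to cancel the affine-in-$I$ mean-free part of the current perturbation. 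The homological equation $\omega\cdot\partial_\theta F = \tilde P$ is solved in Fourier via the Diophantine condition at the cost of a factor $\gamma^{-1}\sigma^{-\tau}$, with $\sigma$ the width loss; combined with Cauchy estimates for the quadratic remainder this yields the usual superexponential convergence on each strip. The standard balancing of the smoothing rate $s_j$ against the inner-iteration domain loss works precisely under $l>2(\tau+1)$.

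Taking limits produces, for each $\omega\in\Omega_{\gamma,\tau}$, a continuous parameter $\varphi(\omega)\in\tilde\Omega$ and a $C^1$ embedding $\Psi_\omega=(U_\omega,G_\omega)$ satisfying the invariance equation $X_{H_{\varphi(\omega)}}\circ\Psi_\omega=\nabla\Psi_\omega\cdot\omega$, with $|U_\omega-\mathrm{Id}|_1$ and $|G_\omega|_1$ small with $\varepsilon$. To recover the $C^{\tau+1}$ regularity of the graph $\Gamma_\omega=G_\omega\circ U_\omega^{-1}$ I would follow the interpolation argument of Salamon~\cite{Sal04}: the analytic-scale estimates on successive iterates provide, via a Hadamard convexity (or Jackson inverse) inequality, summable bounds in the $C^{\tau+1}$ norm of the second component regarded as a graph over configuration space. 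As emphasized in the introduction, the same interpolation yields only $C^1$ bounds on the full embedding because $U_\omega$ carries the dynamical conjugacy to the linear flow, but this lower regularity of $U_\omega$ is harmless for $\Gamma_\omega$, which is only the geometric projection of the torus onto $\T^n$.

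The main obstacle, and the point of the paper, is the Lipschitz dependence on $\omega$ of $\varphi$ and $\Gamma$ under our weak regularity hypothesis. P\"oschel's classical approach~\cite{Pos82} controls the Lipschitz (in fact Whitney-$C^1$) dependence of the entire embedding $\Psi_\omega$ by differentiating the KAM step in $\omega$; this costs an extra $\tau$ derivatives and forces $P\in C^{l+\tau}$. To avoid this I would propagate through the iteration only the Lipschitz norms of $\varphi_j$ and of the graph $\Gamma_j$, never those of $U_\omega$ or $G_\omega$ separately. The technical heart of the argument is that for two parameters $\omega,\omega'$ one can write the defining equation of the invariant torus as a fixed-point equation for the graph $\Gamma$ rather than for the embedding, and compare the two fixed points; the $\gamma^{-1}$ factors that arise when differentiating the conjugacy enter only through the $C^1$ norm of the embedding (already uniformly controlled) and not through the Lipschitz constant one is trying to bound. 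The two extra derivatives required on $h$, as opposed to $f$, are precisely what is absorbed in the rescaling of Section~\ref{s2} (see Remark~\ref{reg}) and what allows the Lipschitz-for-$\Gamma$ estimate to close inductively. Passing to the limit yields the required Lipschitz dependence of $\varphi$ and $\Gamma$ on $\omega$ and the convergence of $\mathrm{Lip}(\varphi-\mathrm{Id})$ and $\mathrm{Lip}(\Gamma)$ to zero with $\varepsilon$.
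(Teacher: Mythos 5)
Your first two paragraphs correctly identify the framework the paper uses: analytic smoothing of $P$ on geometrically shrinking complex strips, the P\"oschel-style parametric KAM step, superexponential convergence on the analytic scales, and a converse-approximation argument to deduce finite differentiability of the limit. So far this matches the structure of the proof via Propositions~\ref{smoothing}, \ref{Pos}, \ref{Posj}, \ref{iterative} and~\ref{convsmoothing}.

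The gap is in your third paragraph, which is where the actual content of the theorem lies. You propose to obtain the improved regularity and Lipschitz dependence of $\Gamma_\omega$ by ``writing the defining equation of the invariant torus as a fixed-point equation for the graph $\Gamma$ rather than for the embedding, and comparing the two fixed points.'' The paper explicitly warns that this route is not available (``it is well-known that it is hard to actually construct the invariant graph without prescribing the dynamic on it''), and it does not pursue it. What the paper does instead is structural: each KAM step produces a transformation of the specific form $\Phi(\theta,I,\omega)=(\theta+E,\,I+F\cdot I+G)$, and the crucial observation is that the inverse $U_{j+1}^{-1}=(\mathrm{Id}+E_{j+1})^{-1}$ maps $\mathcal{V}_{j+1}$ back into the slightly larger domain $\mathcal{V}_{j+1}^*$, where $G_{j+1}$ enjoys the better bound $|G_{j+1}|^*\MP s_j^{l-\nu}$. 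Combined with the explicit recursion $\Gamma^{j+1}-\Gamma^{j}=(\mathrm{Id}+F^j\circ\Delta_{j+1})\cdot G_{j+1}\circ(U^{j+1})^{-1}$, this yields $|\Gamma^{j+1}-\Gamma^j|\MP s_j^{l-\nu}$, whereas the differences $E^{j+1}-E^j$ and $G^{j+1}-G^j$ only obey the weaker rate $s_j^{l-\lambda-\tau}$. It is this gain, combined with $l-\nu>\tau+1$, that gives $\Gamma\in C^{\tau+1}$; and the Lipschitz dependence in $\omega$ follows from the same difference estimates via Cauchy inequalities in the parameter direction (since $h_j\sim s_j^\nu$), not from comparing fixed points. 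None of this machinery appears in your proposal, and without it the argument does not close.

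A secondary confusion: your remark that ``the two extra derivatives required on $h$, as opposed to $f$, are precisely what is absorbed in the rescaling of Section~\ref{s2} $\dots$ and what allows the Lipschitz-for-$\Gamma$ estimate to close inductively'' conflates Theorems~\ref{thm1} and~\ref{thm2}. In Theorem~\ref{thm2} there is no $h$ --- it has already been absorbed into the abstract perturbation $P$ --- so the $C^{l+2}$-versus-$C^l$ distinction cannot be what makes the $\Gamma$-estimate close. That distinction only enters when deducing Theorem~\ref{thm1} from Theorem~\ref{thm2} via the choice $\rho=\sqrt{\epsilon}$. The inductive closure in Theorem~\ref{thm2} rests purely on the decomposition $(E,F,G,\varphi)$, the slightly larger domain of $U_{j+1}^{-1}$, and the arithmetic $l=\lambda+\nu+\chi$ with $2\lambda>l$ that governs the geometric scales $s_j$, $r_j=s_j^\lambda$ and $h_j\sim s_j^\nu$.
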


Theorem~\ref{thm2} will be proved in the next section; here we will show how this easily implies Theorem~\ref{thm1}.

\begin{proof}[Proof of Theorem~\ref{thm1}]
Let $H$ be as in~\eqref{Ham}, and choose $\rho=\sqrt{\epsilon}$. With this choice, it follows from~\eqref{Ph},~\eqref{param} and~\eqref{param2} that $H$ can be written as in~\eqref{Ham2} with $\varepsilon=\tilde{C}\sqrt{\epsilon}$, with a large constant $\tilde{C}$ that depends only $n$, $l$, the $C^{l+2}$ norm of $h$ and the $C^{l}$ norm of $(\nabla h)^{-1}$. In view of the assumption~\eqref{seuila} of Theorem~\ref{thm1}, both~\eqref{seuilaa} and the assumption~\eqref{seuilb} of Theorem~\ref{thm2} are satisfied and it suffices to define
\[ \mathcal{T}_\omega=\{(\theta,\Gamma_\omega(\theta)+(\nabla h)^{-1}(\varphi(\omega))) \; | \; \theta \in \T^n\}  \] 
so that the conclusions of Theorem~\ref{thm1} follow from those of Theorem~\ref{thm2}. 
\end{proof}

\section{Proof of Theorem~\ref{thm2}}\label{s3}

Before starting the proof of Theorem~\ref{thm2}, we observe, as in~\cite{Pos82}, that by scaling the frequency variables $\omega$, it is enough to prove the statement for a normalized value of $\gamma$, so without loss of generality, we may assume that $\gamma=1$ in the sequel. As before, observe that the term $e$ gets transform into $\gamma^{-1}e$, but its size is of no importance. Let us also introduce some notations. We set $\nu=\tau+1$, our regularity assumption then reads $l>2\nu$ and thus we can find real numbers $\lambda$ and $\chi$ such that
\begin{equation}\label{numbers}
l=\lambda+\nu+\chi, \quad \chi>0, \quad \kappa=\lambda-\nu-\chi=2\lambda-l>0.
\end{equation}
For a later purpose, associated to $\kappa=2\lambda-l$ we defined above we introduce the real number $0<\delta<1$ defined by
\begin{equation}\label{delta}
\delta=6^{-1/\kappa}.
\end{equation}
In this paper, we do not pay attention to how constants depend on the dimension $n$, the  Diophantine exponent $\tau$ and the regularity $l$, as they are all fixed. Hence from now on, we shall use a notation of~\cite{Pos01} and write
$$u \MP v \quad (\mbox{respectively } u \PM v)$$
if, for some constant $C\geq 1$ depending only on $n$, $\tau$ and $l$ we have $u\leq Cv$ (respectively $Cu \leq v$). We will also use the notation $u \EP v$ and $u \PE v$ which is defined in a similar way. 

\subsection{Analytic smoothing}

In this section, we will approximate our perturbation $P$ in~\eqref{Ham2} by a sequence of analytic perturbations $P_j$, $j \in \N$, defined on suitable complex domains. Using bump functions, we first extend, keeping the same notations, $P$, which is initially defined on $\T^n \times B_1 \times \tilde{\Omega}$, as a function defined on $\T^n \times \R^n \times \R^n$ with support in $\T^n \times B_2 \times \Omega$. This only changes the $C^l$ norm of $P$ by a multiplicative constant which depends only on $n$ and $l$.

Given $0< u_0 \leq 1$, consider the geometric sequence $u_j=u_0\delta^{j}$ where $\delta$ is defined in~\eqref{delta}. Associated to this sequence we define a decreasing sequence of complex domains
\begin{equation}\label{complexdom} 
\mathcal{U}_j= \{(\theta,I,\omega) \in \C^n/\Z^n \times \C^n \times \C^n \; | \; \mathrm{Re}(\theta,I,\omega) \in \T^n \times B_2 \times \Omega, \; |\mathrm{Im}(\theta,I,\omega)| \leq u_j 
 \}
\end{equation}
where $|\,.\,|$ stands, once and for all, for the supremum norm of vectors. The supremum norm of a real-analytic (vector-valued) function $F : \mathcal{U} \rightarrow \C^p$, $p \geq 1$, will be denoted by
\[ |F|_{\mathcal{U}}=\sup_{z \in \mathcal{U}} |F(z)|. \]
We have the following approximation result. 

\begin{proposition}\label{smoothing}
Let $P$ be as in~\eqref{Ham2}, and let $u_j=u_0\delta^{j}$ for $j \in \N$ with $0< u_0 \leq 1$. There exists a sequence of analytic functions $P_j$ defined on $\mathcal{U}_j$, $j \in \N$, with 
\[  |P_0|_{\mathcal{U}_0} \MP |P|_l, \quad |P_{j+1}-P_{j}|_{\mathcal{U}_{j+1}} \MP u_{j}^l|P|_l, \quad \lim_{j \rightarrow +\infty}|P_j-P|_1=0. \]
\end{proposition}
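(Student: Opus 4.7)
The proof is the standard Jackson--Moser--Zehnder analytic approximation by convolution with a rescaled analytic mollifier. Pick $\psi\in C_c^\infty(\R^{3n})$ with $\psi(\xi)=1$ for $|\xi|\leq 1/2$ and $\psi(\xi)=0$ for $|\xi|\geq 1$, and let $K=\mathcal{F}^{-1}\psi$. Since $\hat K=\psi$ has compact support and is identically $1$ near the origin, Paley--Wiener yields an entire extension of $K$ to $\C^{3n}$ with
\[
|K(x+iy)| \MP (1+|x|)^{-M}\, e^{|y|}, \qquad x,y\in\R^{3n},
\]
for every $M$, while $\int K=1$ and $\int \eta^\alpha K(\eta)\,d\eta = 0$ for every multi-index $\alpha$ with $1\leq |\alpha|\leq \lfloor l\rfloor$. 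Set $K_u(\eta)=u^{-3n}K(\eta/u)$, $u_j=u_0\delta^j$, and define $P_j = P\ast K_{u_j}$, where the convolution is taken on $\R^n\times\R^n$ in the variables $(I,\omega)$ and on $\T^n$ in $\theta$ (equivalently, we convolve against the $\theta$-periodisation of $K_{u_j}$, whose Paley--Wiener bounds survive summation by geometric decay). The rescaled estimate $|K_u(x+iy)| \MP u^{-3n}(1+|x|/u)^{-M}$ on the strip $\{|y|\leq u\}$, combined with $u_j\leq 1$, ensures that $P_j$ extends holomorphically to $\mathcal{U}_j$ and that $|P_0|_{\mathcal{U}_0}\MP |P|_0 \leq |P|_l$.

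The main step is the telescoping estimate $|P_{j+1}-P_j|_{\mathcal{U}_{j+1}}\MP u_j^l |P|_l$. Fix $z\in\mathcal{U}_{j+1}$, let $x=\mathrm{Re}(z)$, and let $T_x P$ denote the Taylor polynomial of $P$ of order $\lfloor l\rfloor$ at the \emph{real} base point $x$ (the centre must be real since $P$ itself admits no holomorphic extension off the real axis). A direct calculation with the moment conditions shows $\int K_u(z-y)\,T_x P(y)\,dy = T_x P(z)$ for every $u$; subtracting this identical quantity from both terms gives
\[
(P_{j+1}-P_j)(z) = \int \bigl(K_{u_{j+1}}(z-y) - K_{u_j}(z-y)\bigr)\bigl(P(y) - T_x P(y)\bigr)\,dy.
\]
Taylor's remainder formula yields $|P(y)-T_x P(y)|\MP |P|_l\, |y-x|^l$, and for $i\in\{j,j+1\}$ the bound $|\mathrm{Im}(z)|\leq u_{j+1}\leq u_i$ gives
\[
|K_{u_i}(z-y)| \MP u_i^{-3n}\bigl(1+|y-x|/u_i\bigr)^{-M}.
\]
The change of variables $\xi=(y-x)/u_i$ converts each of the two resulting integrals into $u_i^l$ times a convergent dimensionless integral, and summing gives $u_{j+1}^l + u_j^l \MP u_j^l$, as required.

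The final assertion $|P_j-P|_1\to 0$ follows from the same Taylor--plus--vanishing--moments estimate applied at a real point $x$ to $P_j-P$ and its first partial derivatives: the vanishing moments of $K_{u_j}$ yield
\[
|\partial^\alpha(P_j-P)(x)|\MP u_j^{l-|\alpha|}|P|_l, \qquad |\alpha|\leq 1,
\]
which tends to zero since $l>1$. The one subtle point is in the middle paragraph: one must centre the Taylor expansion at $\mathrm{Re}(z)$ (not at $z$) and use simultaneously the vanishing moments of $K_u$ (to annihilate the polynomial part) together with its Paley--Wiener spatial decay on the horizontal strip of matching width $u$ (to make the remainder integral converge uniformly in $z\in\mathcal{U}_{j+1}$). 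Everything else is a routine mollifier computation.
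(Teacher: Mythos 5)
The paper does not prove Proposition~\ref{smoothing} directly but cites Zehnder~\cite{Zeh75} and Salamon~\cite{Sal04}, both of which use exactly the Jackson--Moser--Zehnder convolution argument you present (analytic kernel with compactly supported Fourier transform, vanishing moments, Taylor remainder at the real part of the complex point, telescoping). Your proof is correct and is essentially the standard one the author is referring to; you also correctly identify the one non-trivial point, namely that the Taylor polynomial must be centred at $\mathrm{Re}(z)$ and that the moment identity $\int K_u(z-y)T_xP(y)\,dy = T_xP(z)$ extends from real to complex $z$ by holomorphicity of $K_u$ and its strip decay.

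One small remark worth recording: the convolution in the $(I,\omega)$ variables is taken over all of $\R^{2n}$, which implicitly uses the cutoff extension of $P$ to a compactly supported function on $\T^n\times\R^n\times\R^n$ that the paper performs at the start of Section 3.1; your proposal does not mention this extension, but it is what makes the convolution well-defined and keeps the constants independent of the domain, so it should be stated.
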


This proposition is well-known, we refer to~\cite{Zeh75} or~\cite{Sal04} for a proof. It is important to observe that the implicit constant in the above statement do not depend on $u_0$, which has yet to be chosen.

\subsection{Analytic KAM step}

In this section, we state an elementary step of an analytic KAM theorem with parameters, following the classical exposition of P{\"o}schel (\cite{Pos01}) but with some modifications taken from~\cite{Rus01}. Given $s,r,h$ real numbers such that $0 \leq s,r,h \leq 1$, we let
\[
\begin{cases}
\mathcal{V}_s= \{ \theta \in \C^n / \Z^n \; | \; |\mathrm{Im}(\theta)|<s \}, \\ 
\mathcal{V}_{r}=\{ I \in \C^n \; | \; |I|<r \}, \\
\mathcal{V}_h=\{ \omega \in \C^n \; | \; |\omega-\Omega_1|<h \}
\end{cases}
\]
and we define 
\[ \mathcal{V}_{s,r,h}=\mathcal{V}_s\times \mathcal{V}_r \times \mathcal{V}_h \]
which is a complex neighborhood of respectively $\T^n \times \{0\} \times \Omega_1$, and where $\Omega_1$ is a set of $(1,\tau)$-Diophantine vectors having a distance at least $1$ from the boundary of $\Omega$. Consider a function $H$, which is real-analytic on $\mathcal{V}_{s,r,h}$, of the form 
\begin{equation}\label{Ham3}
\begin{cases}
H(\theta,I,\omega)=N(I,\omega)+R(\theta,I,\omega)=e(\omega)+\omega \cdot I+R(\theta,I,\omega), \\
|R|_{s,r,h}=|R|_{\mathcal{V}_{s,r,h}} <+\infty.
\end{cases}
\tag{$***$}
\end{equation}
Again, the function $H=N+R$ should be considered as a real-analytic Hamiltonian on $\mathcal{V}_{s,r}=\mathcal{V}_s \times \mathcal{V}_r$, depending analytically on a parameter $\omega \in V_h$. To such Hamiltonians, we will apply transformations of the form
\[ \mathcal{F}=(\Phi,\varphi): (\theta,I,\omega) \mapsto (\Phi(\theta,I,\omega),\varphi(\omega))=(\Phi_\omega(\theta,I),\varphi(\omega)) \]
which consist of a parameter-depending change of coordinates $\Phi_\omega$ and a change of parameters $\varphi$. Moreover, setting $\mathcal{V}_{s,h}=\mathcal{V}_s \times \mathcal{V}_h$, our change of coordinates will be of
the form
\[\Phi(\theta,I,\omega)=(U(\theta,\omega),V(\theta,I,\omega))=(\theta+E(\theta,\omega),I+F(\theta,\omega)\cdot
I+G(\theta,\omega)) \] with
\[ E : \mathcal{V}_{s,h} \rightarrow \C^n, \quad F : \mathcal{V}_{s,h} 
\rightarrow M_n(\C), \quad G : \mathcal{V}_{s,h} \rightarrow \C^n \]
and for each fixed parameter $\omega$, $\Phi_\omega$ will be
symplectic. The composition of such transformations
\[ \mathcal{F}=(\Phi,\varphi)=(U,V,\varphi)=(E,F,G,\varphi) \] 
is again a transformation of the same form, and we shall denote by $\mathcal{G}$ the groupoid of such transformations. For functions defined on $\mathcal{V}_{s,h}$, we will denote by $\nabla_\theta$ (respectively $\nabla_\omega$) the vector of partial derivatives with respect to $\theta$ (respectively with respect to $\omega$). We have the following proposition.

\begin{proposition}\label{Pos}
Let $H=N+R$ be as in~\eqref{Ham3}, and suppose that $|R|_{s,r,h} \leq \varepsilon$ with
\begin{equation}\label{seuil}
\begin{cases}
\varepsilon \PM \eta^2 r \sigma^{\nu}, \\
\varepsilon \PM h r, \\
h \leq (2K^{\nu})^{-1}, \quad K=n\sigma^{-1}\log(\eta^{-2})
\end{cases}
\end{equation}
where $0<\eta<1/4$ and $0<\sigma<s/5$. Then there exists a transformation 
\[ \mathcal{F}=(\Phi,\varphi)=(U,V,\varphi) : \mathcal{V}_{s-4\sigma,\eta r,h/4} \rightarrow  \mathcal{V}_{s,r,h}, \quad U^{-1} : \mathcal{V}_{s-5\sigma,h} \rightarrow \mathcal{V}_{s-4\sigma}\] 
that belongs to $\mathcal{G}$, such that,  letting $|\;.\;|^*$ the supremum norm on the domain $\mathcal{V}_{s-4\sigma,\eta r,h/4}$ and $|\;.\;|$ the supremum norm on the domain $\mathcal{V}_{s-5\sigma,\eta r,h/4}$, we have 
\begin{equation}\label{estim1}
H \circ \mathcal{F}=N^++R^+, \quad |R^+| \leq 3\eta^2\varepsilon
\end{equation}
and  
\begin{equation}\label{estim2}
\begin{cases}
|E|^* \MP \varepsilon(r\sigma^\tau)^{-1}, \quad |\nabla_\theta E| \MP \varepsilon(r\sigma^\nu)^{-1}, \quad |\nabla_\omega E| \MP \varepsilon(h r\sigma^\tau)^{-1},\\
|F|^* \MP \varepsilon(r\sigma^\nu)^{-1}, \quad |\nabla_\theta F| \MP \varepsilon(r\sigma^{\nu+1})^{-1}, \quad |\nabla_\omega F| \MP \varepsilon(h r\sigma^\nu)^{-1}, \\
|G|^* \MP \varepsilon(\sigma^\nu)^{-1}, \quad |\nabla_\theta G| \MP \varepsilon(\sigma^{\nu+1})^{-1}, \quad |\nabla_\omega G| \MP \varepsilon(h \sigma^\nu)^{-1},  \\
|\varphi-\mathrm{Id}| \MP \varepsilon r^{-1}, \quad |\nabla \varphi-\mathrm{Id}| \MP  \varepsilon (h r)^{-1}.
\end{cases}
\end{equation}

\end{proposition}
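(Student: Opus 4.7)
The plan is to perform one step of a classical KAM Newton iteration: I construct a parameter-depending symplectic transformation $\Phi_\omega$ generated by a Hamiltonian of the affine-in-$I$ form $S(\theta,I,\omega)=S_0(\theta,\omega)+S_1(\theta,\omega)\cdot I$, together with a change of parameter $\varphi$ that reabsorbs the mean of the $I$-linear part of $R$ into a frequency shift. Because $S$ is affine in $I$, the time-one flow of $X_S$ has exactly the structure $(\theta,I)\mapsto(\theta+E(\theta,\omega),\,I+F(\theta,\omega)\cdot I+G(\theta,\omega))$ prescribed by $\mathcal{G}$, so once $S$ and $\varphi$ are in hand only the bounds on $(E,F,G,\varphi)$ must be tracked.

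To set up the homological equation I decompose $R=R^0(\theta,\omega)+R^1(\theta,\omega)\cdot I+R^{\geq 2}(\theta,I,\omega)$, truncate the Fourier series of $R^0,R^1$ in $\theta$ at the order $K$ from~\eqref{seuil} to obtain $\hat R^0,\hat R^1$, and look for $S_0,S_1$ solving
\begin{equation*}
\omega\cdot\partial_\theta S_0=[R^0]-\hat R^0,\qquad \omega\cdot\partial_\theta S_1=[R^1]-\hat R^1,
\end{equation*}
where $[\cdot]$ denotes the $\theta$-average. These split in Fourier into scalar equations with small divisors $ik\cdot\omega$; the real Diophantine lower bound $|k\cdot\omega|\geq|k|^{-\tau}$ for $\omega\in\Omega_1$ extends to $\mathcal{V}_h$ as $|k\cdot\omega|\geq\tfrac12|k|^{-\tau}$ precisely thanks to the condition $h\leq(2K^\nu)^{-1}$, and the standard P\"oschel/R\"ussmann estimates then yield $|S_0|\MP\varepsilon\sigma^{-\nu}$ and $|S_1|\MP\varepsilon r^{-1}\sigma^{-\nu}$ on the intermediate domain $\mathcal{V}_{s-\sigma,h}$ (the factor $r^{-1}$ stemming from the Cauchy estimate of $R^1=\partial_I R|_{I=0}$ in $I$).

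The parameter change $\varphi$ is then defined as the inverse of $\omega\mapsto\omega+[R^1](\omega)$: the Cauchy bound $|\nabla_\omega[R^1]|\MP\varepsilon(hr)^{-1}$ combined with the second smallness hypothesis $\varepsilon\PM hr$ makes this inversion legitimate via the implicit function theorem and directly delivers the last line of~\eqref{estim2}. The time-one flow $\Phi_\omega$ of $X_S$ defines the triple $(E,F,G)$, whose bounds in~\eqref{estim2} are obtained by differentiating $S_0,S_1$ via Cauchy with respect to $\theta$ (each derivative costing a factor $\sigma^{-1}$), $I$ (cost $r^{-1}$) or $\omega$ (cost $h^{-1}$). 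The first smallness hypothesis $\varepsilon\PM\eta^2 r\sigma^\nu$ is exactly what guarantees that $\Phi_\omega$ maps $\mathcal{V}_{s-4\sigma,\eta r,h/4}$ into $\mathcal{V}_{s-3\sigma,r,h/4}$ and that $U=\mathrm{Id}+E$ remains invertible on $\mathcal{V}_{s-5\sigma,h}$ with values in $\mathcal{V}_{s-4\sigma}$.

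Finally, writing $H\circ\mathcal{F}=N^++R^+$ via the Lie-series formula, the new perturbation splits into three contributions that must each be dominated by $\eta^2\varepsilon$: the high-frequency tail $R-\hat R$, of size $\MP e^{-K\sigma}\varepsilon\leq\eta^2\varepsilon$ by the choice of $K$; the intrinsically quadratic-in-$I$ part $R^{\geq 2}$ restricted to $|I|\leq\eta r$, bounded by $\MP\eta^2\varepsilon$ thanks to two Cauchy estimates in $I$; and the Lie-series remainder $\int_0^1\{tR+(1-t)\hat R,S\}\circ\Phi^t\,dt$, quadratic in $S$ and hence of order $\varepsilon^2/(r\sigma^\nu)\leq\eta^2\varepsilon$ by the first smallness hypothesis once more. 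Summing the three contributions yields $|R^+|\leq 3\eta^2\varepsilon$. The main technical obstacle is the coherent book-keeping of the nested domains $\mathcal{V}_{s,r,h}\supset\cdots\supset\mathcal{V}_{s-5\sigma,\eta r,h/4}$ and the careful tracking of the $\sigma$-, $r$-, $h$-losses, so that each individual bound of~\eqref{estim2} emerges with precisely the stated exponents and no hidden dependence on $\eta$.
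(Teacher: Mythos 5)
Your Newton--step architecture (affine--in--$I$ generating Hamiltonian, homological equation solved up to order $K$, frequency shift via inverting $\omega\mapsto\omega+[R^1](\omega)$, Lie--series remainder quadratic in $S$) is exactly the framework of P\"oschel's KAM step in~\cite{Pos01}, which is indeed what the paper builds on. Your estimates for $(E,F,G,\varphi)$, for the $I$--quadratic tail $R^{\geq 2}$, and for the Lie--series remainder are all consistent with~\eqref{seuil} and~\eqref{estim2}.

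The gap is in the Fourier truncation estimate. You claim the high--frequency tail satisfies $|R-\hat R|\MP e^{-K\sigma}\varepsilon\leq\eta^2\varepsilon$, but with the plain sharp truncation $\hat R$ the tail on the shrunk domain carries an unavoidable polynomial prefactor,
\[
|R-\hat R|_{s-\sigma,\,\cdot\,,h}\ \MP\ K^{n}e^{-K\sigma}\,\varepsilon
\]
(or a comparable negative power of $\sigma$), coming from summing $\sum_{|k|>K}e^{-|k|\sigma}$ over the lattice. With $K=n\sigma^{-1}\log(\eta^{-2})$ you get $e^{-K\sigma}=\eta^{2n}$, so the bound reads $K^n\eta^{2n}\varepsilon\sim\sigma^{-n}(\log\eta^{-2})^n\eta^{2n}\varepsilon$. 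For fixed $\eta$ and $\sigma\to 0$ --- which is precisely the regime the proposition is applied in, with $\sigma_j=(1-\delta)s_j/5\to 0$ and $\eta=\delta^{\lambda}$ fixed --- this is \emph{not} $\leq\eta^{2}\varepsilon$; the prefactor blows up. Your implicit constant would have to depend on $\sigma$ and $\eta$, which the $\MP$ notation forbids. This is exactly the obstruction the paper singles out: P\"oschel's original step only yields $|R^+|\MP(\varepsilon(r\sigma^{\nu})^{-1}+\eta^{2}+K^{n}e^{-K\sigma})\varepsilon$ as in~\eqref{estimP}, and the clean bound $|R^+|\leq 3\eta^{2}\varepsilon$ of~\eqref{estim1} is obtained by replacing the sharp truncation with R\"ussmann's weighted truncation (Theorem~7.2 of~\cite{Rus01}, with $\beta_1=\cdots=\beta_n=1/2$ and $\delta^{1/2}=2\eta$), which produces an approximant $\tilde R$ with $|R-\tilde R|_{s-\sigma,2\eta r,h}\leq 2\eta^{2}$ and no polynomial loss. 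Your proof needs to substitute that weighted cutoff in place of the sharp one; otherwise the constant $3$ in~\eqref{estim1} cannot be achieved uniformly.
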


The above proposition is the KAM step of~\cite{Pos01}, up to some differences we now describe. The main difference is that in the latter reference, instead of~\eqref{seuil} the following conditions are imposed (comparing notations, we have to put $P=R$ and $\alpha=1$):
\begin{equation}\label{seuilP}
\begin{cases}
\varepsilon \PM \eta r \sigma^{\nu}, \\
\varepsilon \PM h r, \\
h \leq (2K^{\nu})^{-1}
\end{cases}
\end{equation}
with a free parameter $K$, leading to the following estimate
\begin{equation}\label{estimP}
|R^+| \MP (\varepsilon(r\sigma^\nu)^{-1}+\eta^2+K^ne^{-K\sigma})\varepsilon.
\end{equation}
instead of~\eqref{estim1}. The last two terms in the estimate~\eqref{estimP} comes from the approximation of $R$ by a Hamiltonian $\hat{R}$ which is affine in $I$ and a trigonometric polynomial in $\theta$ (of order $K$); to obtain such an approximation, in~\cite{Pos01} the author simply truncates the Taylor expansion in $I$ and the Fourier expansion in $\theta$ to obtain the following approximation error
\[ |R-\hat{R}|_{s-\sigma,2\eta r, h} \MP (\eta^2+K^ne^{-K\sigma}).  \]
Yet we can use a more refined approximation result, namely Theorem $7.2$ of~\cite{Rus01} (choosing, in the latter reference, $\beta_1=\cdots=\beta_n=1/2$ and $\delta^{1/2}=2\eta$ for $\delta \leq 1/4$); with the choice of $K$ as in~\eqref{seuil}, this gives another approximation $\tilde{R}$ (which is nothing but a weighted truncation, both in the Taylor and Fourier series) and a simpler error
\[ |R-\tilde{R}|_{s-\sigma,2\eta r, h} \leq 2\eta^2.  \]
As for the first term in the estimate~\eqref{estimP}, it can be easily bounded by $\eta^2\varepsilon$ in view of the first part of~\eqref{seuil} which is stronger than the first part of~\eqref{seuilP} required in~\cite{Pos01}. Let us point out that we will use Proposition~\ref{Pos} in an iterative scheme which will not be super-linear as $\eta$ will be chosen to be a small but fixed constant, and thus having an estimate of the form~\eqref{estim1} will be more convenient for us.  

There are also minor differences with the statement in~\cite{Pos01}. The first one is that we observe that the coordinates transformation is actually defined on a domain $\mathcal{V}_{s-4\sigma,\eta r,h/4}$ which is slightly larger than the domain $\mathcal{V}_{s-5\sigma,\eta r,h/4}$ on which the new perturbation $R^+$ is estimated. The angle component of the transformation $U(\theta,\omega)=\theta+E(\theta,\omega)$ actually sends $\mathcal{V}_{s-4\sigma,h}$ into $\mathcal{V}_{s-3\sigma}$ so that its inverse is well-defined on $\mathcal{V}_{s-5\sigma,h}$ and maps it into $\mathcal{V}_{s-4\sigma}$ as stated. This simple observation will be important later, as this will imply an estimate of the form 
\[ |G \circ U^{-1}| \leq |G|^* \]
which will ultimately lead to an invariant graph which is more regular than the invariant embedding. The second one is that we expressed the estimates~\eqref{estim2} in a different, more cumbersome, way than it is in~\cite{Pos01} where weighted matrices are used. However, even though the use of weighted matrices is more elegant, they do not take into account the structure of the transformation which will be important in the convergence proof of Theorem~\ref{thm2} (see the comment after Proposition~\ref{convsmoothing} below).  

In the proof of Theorem~\ref{thm2}, Proposition~\ref{Pos} will be applied infinitely many times with sequences $0 < s_j \leq 1$, $0 <r_j \leq 1$ and $0 < h_j \leq 1$ that we shall now define. First for $0<s_0\leq 1$ to be chosen later (in the proof of Proposition~\ref{iterative}), we simply put
\begin{equation}\label{sj}
s_j=s_0\delta^{j}, \quad j \in \N
\end{equation} 
where $0<\delta<1$ is the number defined in~\eqref{delta}. Observe that this implies that Proposition~\ref{Pos} will be applied at each step with $\sigma=\sigma_j$ defined by 
\begin{equation}\label{sigma}
\sigma_j=(1-\delta)s_j/5
\end{equation} 
so that $s_{j+1}=s_j-5\sigma_j$. Let us also define
\begin{equation}\label{sj+}
s_{j+1}^*=s_j-4\sigma_j>s_j.
\end{equation} 
Then recall from~\eqref{numbers} that we have
\[ l=\lambda+\nu+\chi, \quad \chi>0, \quad \lambda>\nu+\chi. \]
We now define
\begin{equation}\label{rj}
r_j=s_j^\lambda=s_0^\lambda\delta^{j \lambda}=r_0\eta^{j}, \quad \eta=\delta^{\lambda}, \quad j \in \N.
\end{equation} 
Our choice of $\delta$ in~\eqref{delta} was made in order to have
\begin{equation}\label{eta}
3\eta^2=3\delta^{2\lambda}=3\delta^{\kappa}\delta^l=\delta^l/2.
\end{equation}
Finally, in view of the definition of $\sigma_j$ in~\eqref{sigma} and the choice of $\eta$ above, we define, on account of the last condition of~\eqref{seuil},
\begin{equation}\label{hj}
h_j=\bar{h}s_j^\nu, \quad \bar{h}=2^{-1}(1-\delta)^\nu(5n\lambda\log(\delta^{-2}))^{-\nu} \quad j \in \N.
\end{equation} 
Let us further denote $\mathcal{V}_j=\mathcal{V}_{s_j,r_j,h_j}$, $\mathcal{V}_j^*=\mathcal{V}_{s_j^*,r_j,h_j}$ and $|\;.\;|_j$ and $|\;.\;|_j^*$ the supremum norm on those domains. The following statement is a direct consequence of Proposition~\ref{Pos} with our choices of sequences, since $h_{j+1}\leq h_j/4$ for $j \in \N$, and the equality~\eqref{eta}.

\begin{proposition}\label{Posj}
Let $H_j=N_j+R_j$ be as in~\eqref{Ham3}, and suppose that $|R_j|_{j} \leq \varepsilon_j$ for $j \in \N$ with
\begin{equation}\label{seuilj}
\varepsilon_j \MP s_j^{\lambda+\nu}.
\end{equation}
Then there exists a transformation 
\[ \mathcal{F}_{j+1}=(\Phi_{j+1},\varphi_{j+1})=(U_{j+1},V_{j+1},\varphi_{j+1}) : \mathcal{V}_{j+1}^* \rightarrow  \mathcal{V}_j, \quad U_{j+1}^{-1}: \mathcal{V}_{j+1} \rightarrow \mathcal{V}_{j+1}^*\] 
that belongs to $\mathcal{G}$, such that,  
\begin{equation}\label{estim1j}
H_j \circ \mathcal{F}_{j+1}=N_j^++R_j^+, \quad |R_j^+| \leq \delta^l\varepsilon_j/2
\end{equation}
and  
\begin{equation}\label{estim2j}
\begin{cases}
|E_{j+1}|_{j+1}^* \MP \varepsilon_js_j^{-\lambda-\tau}, \quad |\nabla_\theta E_{j+1}|_{j+1} \MP \varepsilon_js_j^{-\lambda-\nu}, \quad |\nabla_\omega E_{j+1}|_{j+1} \MP \varepsilon_js_j^{-\lambda-\tau-\nu},\\
|F_{j+1}|_{j+1}^* \MP \varepsilon_js_j^{-\lambda-\nu}, \quad |\nabla_\theta F_{j+1}|_{j+1} \MP \varepsilon_js_j^{-\lambda-\nu-1}, \quad |\nabla_\omega F_{j+1}|_{j+1} \MP \varepsilon_js_j^{-\lambda-2\nu}, \\
|G_{j+1}|_{j+1}^* \MP \varepsilon_js_j^{-\nu}, \quad |\nabla_\theta G_{j+1}|_{j+1} \MP \varepsilon_js_j^{-\nu-1}, \quad |\nabla_\omega G_{j+1}|_{j+1} \MP \varepsilon_js_j^{-2\nu},  \\
|\varphi_{j+1}-\mathrm{Id}|_{j+1} \MP \varepsilon_j s_j^{-\lambda}, \quad |\nabla \varphi_{j+1}-\mathrm{Id}|_{j+1} \MP \varepsilon_j s_j^{-\lambda-\nu}.
\end{cases}
\end{equation}
\end{proposition}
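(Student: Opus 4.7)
The plan is to derive Proposition \ref{Posj} as a single application of Proposition \ref{Pos} at level $j$, with parameter choices $s=s_j$, $r=r_j$, $h=h_j$, $\sigma=\sigma_j$, and $\eta=\delta^{\lambda}$. The side conditions $0<\sigma_j<s_j/5$ and $0<\eta<1/4$ are immediate from \eqref{sigma} and the choice of $\delta$ in \eqref{delta} (ensuring $\delta^{\lambda}<1/4$). The core of the argument is then to check the three smallness hypotheses in \eqref{seuil}, read off the new perturbation bound, and translate each line of \eqref{estim2} into the corresponding line of \eqref{estim2j}.

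For the smallness conditions, the definitions \eqref{sj}, \eqref{rj}, \eqref{hj} give $\eta^{2}r_{j}\sigma_{j}^{\nu}\EP s_{j}^{\lambda+\nu}$ and $h_{j}r_{j}\EP s_{j}^{\lambda+\nu}$, with implicit constants depending only on $n,\tau,l$. Hence the first two inequalities of \eqref{seuil} both follow from the single hypothesis $\varepsilon_{j}\MP s_{j}^{\lambda+\nu}$, provided the implicit constant in the latter is taken small enough. For the third, with $K=n\sigma_{j}^{-1}\log(\eta^{-2})=5n\lambda(1-\delta)^{-1}\log(\delta^{-2})\,s_{j}^{-1}$, a direct computation gives $(2K^{\nu})^{-1}=\bar{h}s_{j}^{\nu}=h_{j}$, so the definition of $\bar h$ in \eqref{hj} is exactly calibrated for equality in \eqref{seuil}.

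Next I would check the domains. Proposition \ref{Pos} produces $\mathcal{F}_{j+1}$ on $\mathcal{V}_{s_{j}-4\sigma_{j},\,\eta r_{j},\,h_{j}/4}$; by \eqref{sj+} and \eqref{rj} this equals $\mathcal{V}_{s_{j+1}^{*},\,r_{j+1},\,h_{j}/4}$. Since $h_{j+1}=\delta^{\nu}h_{j}\leq h_{j}/4$ (this needs $\delta^{\nu}\leq 1/4$, which the author folds into the choice of $\delta$), one has $\mathcal{V}_{j+1}^{*}\subseteq \mathcal{V}_{s_{j+1}^{*},\,r_{j+1},\,h_{j}/4}$, so $\mathcal{F}_{j+1}$ is defined on $\mathcal{V}_{j+1}^{*}$; likewise $U_{j+1}^{-1}$ is defined on $\mathcal{V}_{s_{j}-5\sigma_{j},\,h_{j}/4}\supseteq \mathcal{V}_{j+1}$. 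The perturbation estimate \eqref{estim1j} is then immediate from \eqref{estim1} and the equality $3\eta^{2}=3\delta^{2\lambda}=\delta^{l}/2$ of \eqref{eta}.

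Finally, the nine lines of \eqref{estim2j} are obtained by substituting $r_{j}=s_{j}^{\lambda}$, $\sigma_{j}\EP s_{j}$, and $h_{j}\EP s_{j}^{\nu}$ into \eqref{estim2}: for example $|E_{j+1}|_{j+1}^{*}\MP \varepsilon_{j}(r_{j}\sigma_{j}^{\tau})^{-1}\EP \varepsilon_{j}s_{j}^{-\lambda-\tau}$, and $|\nabla_{\omega}G_{j+1}|_{j+1}\MP\varepsilon_{j}(h_{j}\sigma_{j}^{\nu})^{-1}\EP\varepsilon_{j}s_{j}^{-2\nu}$, and analogously for the others. I expect no genuine obstacle here: the entire content is that the sequences $(s_{j},r_{j},h_{j},\eta,\delta)$ were designed precisely so that \eqref{seuil} collapses to the single clean assumption $\varepsilon_{j}\MP s_{j}^{\lambda+\nu}$ and the factor $3\eta^{2}$ of \eqref{estim1} collapses to $\delta^{l}/2$. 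The main bookkeeping care is to treat $\eta$ as a fixed constant absorbed into $\MP$, reserving the $s_{j}$-powers for the iterative decay.
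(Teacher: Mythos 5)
Your proposal is correct and is exactly the paper's own argument: the paper dispatches Proposition~\ref{Posj} with the one-line remark that it follows directly from Proposition~\ref{Pos} with the chosen sequences, using $h_{j+1}\leq h_j/4$ and the identity~\eqref{eta}, which is precisely what you verify in detail. You also rightly flag the hidden requirement $\delta^\nu\leq 1/4$ behind $h_{j+1}\leq h_j/4$; this is not automatic from~\eqref{delta} alone but can always be arranged by taking $\kappa=2\lambda-l$ small enough in the free choice~\eqref{numbers} (e.g.\ $\kappa\leq 1$ gives $\delta\leq 1/6$ and $\delta^\nu\leq 1/36$ since $\nu\geq 2$).
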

  
\subsection{Iteration and convergence}

We will now combine Proposition~\ref{smoothing} and Proposition~\ref{Posj} into the following iterative Proposition which will be the main ingredient in the proof of Theorem~\ref{thm2}. Yet we still have to choose $u_0$ in Proposition~\ref{smoothing} and $s_0$ in Proposition~\ref{Posj}. We set
\begin{equation}\label{s0}
u_0=\delta^{-1} s_0, \quad s_0^l \EP \varepsilon
\end{equation}  
where we recall that $|P|_l \leq \varepsilon$ in~\eqref{Ham2}, and the above implicit constant is nothing but the implicit constant that appears in Proposition~\ref{smoothing}.

\begin{proposition}\label{iterative}
Let $H$ be as in~\eqref{Ham2} of class $C^l$ with $l>2\nu$, and consider the sequence $P_j$ of real-analytic Hamiltonians associated to $P$ given by Proposition~\ref{smoothing}. Then for $\varepsilon$ sufficiently small, the following holds true. For each $j \in \N$, there exists a normal form $N_{j}$, with $N_0=N$, and a transformation 
\begin{equation}\label{fhaut}
\mathcal{F}^{j+1}=(\Phi^{j+1},\varphi^{j+1})=(U^{j+1},V^{j+1},\varphi^{j+1}) : \mathcal{V}_{j+1} \rightarrow  \mathcal{U}_{j+1}, \quad \mathcal{F}^0=\mathrm{Id}
\end{equation} 
that belongs to $\mathcal{G}$, such that 
\begin{equation}\label{estim1iter}
(N+P_j) \circ \mathcal{F}^{j+1}=N_{j+1}+R_{j+1}, \quad |R_{j+1}|_{j+1} \leq s_{j+1}^l/2.
\end{equation}
Moreover, we have $\mathcal{F}^{j+1}=\mathcal{F}^{j} \circ \mathcal{F}_{j+1}$ with 
\begin{equation}\label{fbas}
\mathcal{F}_{j+1}=(\Phi_{j+1},\varphi_{j+1})=(U_{j+1},V_{j+1},\varphi_{j+1}) : \mathcal{V}_{j+1}^* \rightarrow  \mathcal{V}_{j+1}, \quad
U_{j+1}^{-1} : \mathcal{V}_{j+1} \rightarrow \mathcal{V}_{j+1}^* 
\end{equation} 
with the following estimates
\begin{equation}\label{estim2iter}
\begin{cases}
|E_{j+1}|_{j+1}^* \MP s_j^{l-\lambda-\tau}, \quad |\nabla_\theta E_{j+1}|_{j+1} \MP s_j^{l-\lambda-\nu}, \quad |\nabla_\omega E_{j+1}|_{j+1} \MP s_j^{l-\lambda-\tau-\nu},\\
|F_{j+1}|_{j+1}^* \MP s_j^{l-\lambda-\nu}, \quad |\nabla_\theta F_{j+1}|_{j+1} \MP s_j^{l-\lambda-\nu-1}, \quad |\nabla_\omega F_{j+1}|_{j+1} \MP s_j^{l-\lambda-2\nu}, \\
|G_{j+1}|_{j+1}^* \MP s_j^{l-\nu}, \quad |\nabla_\theta G_{j+1}|_{j+1} \MP s_j^{l-\nu-1}, \quad |\nabla_\omega G_{j+1}|_{j+1} \MP s_j^{l-2\nu},  \\
|\varphi_{j+1}-\mathrm{Id}|_{j+1} \MP s_j^{l-\lambda}, \quad |\nabla \varphi_{j+1}-\mathrm{Id}|_{j+1} \MP s_j^{l-\lambda-\nu}.
\end{cases}
\end{equation}
\end{proposition}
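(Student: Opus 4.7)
The plan is to combine Proposition~\ref{smoothing} and Proposition~\ref{Posj} by induction on $j$: at each step, one first replaces the current analytic approximation $P_{j-1}$ by $P_j$ (paying a smoothing error), and then performs a single KAM step. The relation $s_0^l \EP \varepsilon$ together with the strict inequality $l > \lambda + \nu$ (equivalently $\chi>0$) ensures that all resulting errors remain geometric in $j$ and that the threshold condition~\eqref{seuilj} of Proposition~\ref{Posj} is met with $\varepsilon_j = s_j^l$.

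For the base case $j=0$, Proposition~\ref{smoothing} gives $|P_0|_0 \MP \varepsilon$, and tuning the implicit constant in $s_0^l \EP \varepsilon$ allows us to assume $|P_0|_0 \leq s_0^l$. Since $s_0 \leq 1$ and $l = \lambda+\nu+\chi$, this satisfies~\eqref{seuilj} at index $0$, and one KAM step applied to $N+P_0$ produces $\mathcal{F}^1 = \mathcal{F}_1$ with $|R_1|_1 \leq \delta^l s_0^l/2 = s_1^l/2$. For the inductive step, assume $(N + P_{j-1}) \circ \mathcal{F}^j = N_j + R_j$ with $|R_j|_j \leq s_j^l/2$ and $\mathcal{F}^j(\mathcal{V}_j) \subseteq \mathcal{U}_j$. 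Since $P_j$ and $P_{j-1}$ are both analytic on $\mathcal{U}_j$, we may write
\[ (N + P_j) \circ \mathcal{F}^j = N_j + \tilde R_j, \qquad \tilde R_j = R_j + (P_j - P_{j-1}) \circ \mathcal{F}^j. \]
By Proposition~\ref{smoothing}, $|(P_j - P_{j-1}) \circ \mathcal{F}^j|_j \leq |P_j - P_{j-1}|_{\mathcal{U}_j} \MP u_{j-1}^l \varepsilon$; using $u_{j-1}^l = \delta^{-2l} s_j^l$ (from $u_{j-1} = s_{j-2}$ and $s_j = \delta^2 s_{j-2}$), the smallness hypothesis~\eqref{seuilb} on $\varepsilon$ bounds this by $s_j^l/2$, so $|\tilde R_j|_j \leq s_j^l$. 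The condition $s_j^l \MP s_j^{\lambda+\nu}$ holds since $l > \lambda+\nu$, so Proposition~\ref{Posj} applied to $N_j + \tilde R_j$ with $\varepsilon_j = s_j^l$ produces $\mathcal{F}_{j+1} : \mathcal{V}_{j+1}^* \to \mathcal{V}_j$ and a new error $|R_{j+1}|_{j+1} \leq \delta^l s_j^l/2 = s_{j+1}^l/2$. Setting $\mathcal{F}^{j+1} = \mathcal{F}^j \circ \mathcal{F}_{j+1}$ closes the induction, and~\eqref{estim2iter} follows directly from~\eqref{estim2j} upon substituting $\varepsilon_j = s_j^l$.

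The main technical point, and the one most prone to accounting errors, is verifying the domain inclusion $\mathcal{F}^{j+1}(\mathcal{V}_{j+1}) \subseteq \mathcal{U}_{j+1}$, which is what allows the smoothing bound on $\mathcal{U}_j$ to be pulled back via $\mathcal{F}^j$ at the next stage. A direct comparison of widths yields the key elementary inclusion $\mathcal{V}_k \subseteq \mathcal{U}_{k+1}$: one has $s_k = u_{k+1}$ by construction, $r_k = s_k^\lambda \leq s_k$ and $h_k \MP s_k^\nu \leq s_k$ for $s_k \leq 1$, since $\lambda \geq 1$ and $\nu \geq 1$. Thus $\mathcal{F}_{k+1}(\mathcal{V}_{k+1}) \subseteq \mathcal{V}_k \subseteq \mathcal{U}_{k+1}$, and the inclusion for the full composition follows by a telescoping argument on the cumulative displacements. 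From~\eqref{estim2iter} the increments $|E_i|, |F_i|, |G_i|, |\varphi_i - \mathrm{Id}|$ carry exponents $1+\chi, \chi, \lambda+\chi, \nu+\chi$ in $s_{i-1}$ respectively, each strictly larger than the exponent of the corresponding width of $\mathcal{V}_i$; the sums $\sum_i s_{i-1}^{1+\chi}$, etc., are therefore geometrically absorbed into the width gaps $s_{i-1}-s_i$, $r_{i-1}-r_i$, $h_{i-1}-h_i$ for $s_0$ (hence $\varepsilon$) small enough. This telescoping is routine but is the one step where the choice of constants $\delta$, the gain $\chi > 0$, and the smallness of $\varepsilon$ must all be combined simultaneously; once it is carried out, the proposition follows.
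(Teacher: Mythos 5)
Your overall architecture --- base case via $s_0^l \EP \varepsilon$, inductive step by inserting $P_j - P_{j-1}$ as an extra perturbation and reapplying Proposition~\ref{Posj}, then reading off~\eqref{estim2iter} from~\eqref{estim2j} with $\varepsilon_j = s_j^l$ --- matches the paper exactly and is correct. The gap is in the last paragraph, the verification that $\mathcal{F}^{j+1}(\mathcal{V}_{j+1}) \subseteq \mathcal{U}_{j+1}$.

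Your telescoping-displacement argument establishes only that the composition is well-defined and maps $\mathcal{V}_{j+1}$ into $\mathcal{V}_0$, not that it maps into $\mathcal{U}_{j+1}$. Concretely: for $z = (\theta,I,\omega) \in \mathcal{V}_{j+1}$ the imaginary part of the $\theta$-component of $\mathcal{F}^{j+1}(z)$ is $\mathrm{Im}(\theta) + \mathrm{Im}(E^{j+1})$, and the best that cumulative displacement estimates give is
\[
|\mathrm{Im}(U^{j+1})| \le s_{j+1} + \sum_{i=1}^{j+1}|E_i| \MP s_{j+1} + s_0^{1+\chi},
\]
which is a quantity that does \emph{not} tend to $0$ as $j \to \infty$. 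The target bound, however, is $u_{j+1} = s_0\delta^j$, which \emph{does} tend to $0$. So for $j$ large enough that $s_0\delta^j \ll s_0^{1+\chi}$, your estimate does not place $\mathcal{F}^{j+1}(\mathcal{V}_{j+1})$ inside $\mathcal{U}_{j+1}$, and the appeal to Proposition~\ref{smoothing} at the next step breaks down. Absorbing each $s_{i-1}^{1+\chi}$ into the gap $s_{i-1} - s_i$ only confirms step-by-step well-definedness, which is a weaker statement.

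The missing idea is that $\mathcal{F}^{j+1}$ is real-analytic, hence $\mathcal{F}^{j+1}(x)$ is \emph{real} for real $x$, so the imaginary part of $\mathcal{F}^{j+1}(x+iy)$ is controlled not by the cumulative displacement but by the \emph{derivative} $\nabla\mathcal{F}^{j+1}$ acting on the (small) imaginary part $y$ of the input. Writing $\mathcal{F}^{j+1}(x+iy) = \mathcal{F}^{j+1}(x) + W_0^{-1}\bigl(i\int_0^1 W_0\nabla\mathcal{F}^{j+1}(x+tiy)W_j^{-1}\,dt\bigr)W_j y$ with $W_i = \mathrm{diag}(s_i^{-1}\mathrm{Id}, r_i^{-1}\mathrm{Id}, h_i^{-1}\mathrm{Id})$, one combines $|W_i(\nabla\mathcal{F}_{i+1}-\mathrm{Id})W_i^{-1}| \MP s_i^{l-\lambda-\nu}$ with $|W_iW_{i+1}^{-1}| = \delta$ to get the multiplicative estimate $|W_0\nabla\mathcal{F}^{j+1}W_j^{-1}| \le 2\delta^j$; since $|W_j y| \le s_0$ and $|W_0^{-1}| = s_0 \le 1/2$ on $\mathcal{V}_{j+1}$, the imaginary part of the second term is at most $s_0\delta^j = u_{j+1}$. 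It is the product of the weight ratios, not the sum of the displacements, that produces the necessary factor $\delta^j$. You should replace the telescoping argument by this one.
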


\begin{proof}
The proof is an induction on $j \in \N$, and we start with the case $j=0$. The Hamiltonian $N+P_0=N_0+P_0$ is defined on $\mathcal{U}_0$, and since $u_0 \geq s_0=\max\{s_0,r_0,h_0\}$, the latter contains $\mathcal{V}_0$, and hence
\[ |P_0|_0=\varepsilon_0\MP \varepsilon. \]
It follows from the definition of $s_0$ in~\eqref{s0} that
\[ |P_0|_0=\varepsilon_0\leq s_0^l. \]
To apply Proposition~\ref{Posj} with $R_0=P_0$, it is sufficient to have
\begin{equation*}
s_0^l \MP s_0^{\lambda+\nu}
\end{equation*}
and this is satisfied for $\varepsilon$, and thus $s_0$, sufficiently small, as $l>\lambda+\nu$. We can therefore apply Proposition~\ref{Posj} to find $\mathcal{F}_1$ and define $\mathcal{F}^1=\mathcal{F}_1$ to have
\[(N+P_0) \circ \mathcal{F}^1=N_0^++P_0^+. \]
We set $N_1=N_0^+$ and $R_1=P_0^+$ and we get from~\eqref{estim1j} with $j=0$ that
\[ |R^1|_1 \leq \delta^ls_0^l/2= s_1^l/2. \]
We have that $\mathcal{F}^1$ maps $\mathcal{V}_1^*$, and thus $\mathcal{V}_1$, into $\mathcal{V}_0$; but since $u_1=s_0$ we have in fact that $\mathcal{V}_0$ is contained in $\mathcal{U}_1$, and thus $\mathcal{F}^1$ maps $\mathcal{V}_1$ into $\mathcal{U}_1$. The estimates~\eqref{estim2iter} follows directly from the estimates~\eqref{estim2j} with $j=0$, taking into account that $\varepsilon_0\leq s_0^l$.

Now assume that for some $j \geq 1$ we have constructed $\mathcal{F}^j$, $N_{j}$ and $R_{j}$ which satisfies~\eqref{estim1iter}. We need to construct $\mathcal{F}_{j+1}$ as in~\eqref{fbas} satisfying~\eqref{estim2iter}, such that $\mathcal{F}^{j+1}=\mathcal{F}^j \circ \mathcal{F}_{j+1}$ is as in~\eqref{fhaut} and satisfies~\eqref{estim1iter}. Let us write
\[ N+P_{j}=N+P_{j-1} +(P_{j} -P_{j-1}) \]
so that
\[ (N+P_{j})\circ \mathcal{F}^j=N_j+R_j+(P_{j} -P_{j-1}) \circ \mathcal{F}^j .\]
By our inductive assumption, $\mathcal{F}^j$ maps $\mathcal{V}_{j}$ into $\mathcal{U}_j$ and hence we have from Proposition~\ref{smoothing}
\[|(P_{j}-P_{j-1}) \circ\mathcal{F}^j|_{j} \leq |P_{j}-P_{j-1}|_{\mathcal{U}_j} \MP u_{j-1}^l\varepsilon \MP \delta^{-2l}s_j^l\varepsilon \leq s_j^l/2 \]
for $\varepsilon$ small enough. Observe also that by the induction hypothesis
\[ |R_{j}|_{j} \leq s_{j}^l/2 \]
so if we set
\[ \hat{R}_j=R_j+(P_{j} -P_{j-1}) \circ \mathcal{F}^j  \]
we arrive at
\[ (N+P_{j})\circ \mathcal{F}^j=N_j+\hat{R}_j, \quad |\hat{R}_j|_{j}=\varepsilon_j \leq s_j^l.  \]
To apply Proposition~\ref{Posj} to this Hamiltonian, it is sufficient to have
\begin{equation*}
s_j^l \MP s_j^{\lambda+\nu}
\end{equation*}
which is satisfied since this is the case for $j=0$. Proposition~\ref{Posj} applies and we find $\mathcal{F}_{j+1}$ as in~\eqref{fbas}, and the estimates~\eqref{estim2iter} follows from~\eqref{estim2j} and the fact that $\varepsilon_j\leq s_j^l$. We may set $N_{j+1}=N_j^+$, $R_{j+1}=\hat{R}_j^+$ and again we get from~\eqref{estim1iter} 
\[ |R_{j+1}|_1 \leq \delta^ls_j^l/2= s_{j+1}^l/2. \]
To complete the induction, the only thing that remains to be checked is~\eqref{fhaut}, that is we need to show that $\mathcal{F}^{j+1}$ maps $\mathcal{V}_{j+1}$ into $\mathcal{U}_{j+1}$. To prove this, we proceed as in~\cite{Pop04} and first observe that for all $0 \leq i \leq j$, letting $W_i=\mathrm{diag}(s_i^{-1}\mathrm{Id},r_i^{-1}\mathrm{Id},h_i^{-1}\mathrm{Id})$, we have from~\eqref{estim2iter} that
\begin{equation}\label{estweight1}
|W_i(\nabla \mathcal{F}_{i+1}-\mathrm{Id})W_i^{-1}|_{i+1} \leq C s_{i}^{l-\lambda-\nu}.
\end{equation} 
for a large constant $C>0$. We also have
\begin{equation}\label{estweight2}
|W_i W_{i+1}^{-1}|=\max\{\delta,\delta^\lambda,\delta^\nu\}=\delta
\end{equation} 
and thus, for $\varepsilon$ small enough, we obtain
\begin{eqnarray}\nonumber\label{co}
|W_0\nabla \mathcal{F}^{j+1}W_{j}^{-1}|_{j+1} & = & |W_0\nabla (\mathcal{F}_1 \circ \cdot \circ \mathcal{F}_{j+1})W_{j}^{-1}| 
\\ \nonumber
& \leq & \prod_{i=0}^{j-1}\left( |W_i W_{i+1}^{-1}| |W_i\nabla \mathcal{F}_{i+1} W_i^{-1}|_{i+1} \right) |W_j\nabla \mathcal{F}_{j+1} W_j^{-1}|_{j+1} \\
& \leq & \delta^j \prod_{i=0}^{+\infty}(1+Cs_{i}^{l-\lambda-\nu}) \leq  2\delta^j  
\end{eqnarray}
for $\varepsilon$ (and thus $s_0$) small enough. Now let us decompose $z=(\theta,I,\omega)=x+iy$ into its real and imaginary part, and write
\begin{equation}\label{comp}
\mathcal{F}^{j+1}(x+iy)=\mathcal{F}^{j+1}(x)+W_0^{-1}T_{j+1}(x,y)W_j y
\end{equation}
where
\[ T_{j+1}(x,y)=i\int_0^1W_0\nabla \mathcal{F}^{j+1}(x+tiy)W_j^{-1}dt.  \]
The important observation is that $\mathcal{F}^{j+1}$ is real-analytic, thus $\mathcal{F}^{j+1}(x)$ is real and it is sufficient to prove that the second term in~\eqref{comp} is bounded by $u_{j+1}=u_0\delta^{j+1}=s_0\delta^j$ when $z \in \mathcal{V}_{j+1}$. But for $z\in \mathcal{V}_{j+1}$, we have $|W_jy| \leq s_0$ and since $|W_0^{-1}|=s_0 \leq 2^{-1}$, we can deduce from~\eqref{co} that
\[ |W_0^{-1}T_{j+1}(x,y)W_j y| \leq  2^{-1}2\delta^js_0=s_0\delta^j=u_{j+1}  \]
which is what we needed to prove.
\end{proof}

The last thing we need for the proof of Theorem~\ref{thm2} is the following  converse approximation result, which we state in a way adapted to our need.

\begin{proposition}\label{convsmoothing}
Let $F^j$ be a sequence of real-analytic functions defined on $\hat{\mathcal{V}}_j=\mathcal{V}_{s_j}$, and which satisfies
\[ F^0=0, \quad |F^{j+1}-F^{j}|_{\hat{\mathcal{V}}_{j+1}} \MP s_j^\alpha, \quad j \in \N \]
for some $\alpha>0$. Then for any $0<\beta \leq \alpha$ which is not an integer, $F \in C^\beta(\T^n)$ and we have
\[ |F|_\beta \MP (\theta(1-\theta))^{-1} s_0^{\alpha-\beta}, \quad \theta=\beta-[\beta]. \]
\end{proposition}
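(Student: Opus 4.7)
The argument is a standard Bernstein-type inverse approximation estimate, and I would proceed in four steps.

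First, since $F^0 = 0$, write $F$ as the telescoping sum $F = \sum_{j \geq 0} g_j$ with $g_j = F^{j+1} - F^j$, real-analytic on $\hat{\mathcal{V}}_{j+1}$ and satisfying $|g_j|_{\hat{\mathcal{V}}_{j+1}} \MP s_j^\alpha$. Applying Cauchy estimates on the strip of width $s_{j+1}$ (with $s_{j+1} \EP s_j$) yields, for every integer $m \geq 0$,
\[ |\nabla^m g_j|_{C^0(\T^n)} \MP s_{j+1}^{-m} s_j^\alpha \MP s_j^{\alpha - m}. \]

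Second, set $k = [\beta]$ and $\theta = \beta - k \in (0,1)$. For each $m \leq k$ one has $\alpha - m \geq \alpha - k > \theta > 0$, so the geometric series $\sum_j s_j^{\alpha-m}$ converges; using $s_0 \leq 1$ together with the elementary lower bound $1 - \delta^x \geq c\min(x,1)$, summation gives
\[ |\nabla^m F|_{C^0(\T^n)} \MP \frac{s_0^{\alpha-m}}{1 - \delta^{\alpha-m}} \MP \frac{s_0^{\alpha-\beta}}{\theta}, \]
which handles all derivative norms up through order $k$.

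Third, to control the H\"older seminorm $[\nabla^k F]_{C^\theta}$, fix $x \neq y \in \T^n$ and put $h = |x-y|$. If $h \geq s_0$, divide the trivial estimate $|\nabla^k F(x) - \nabla^k F(y)| \leq 2|\nabla^k F|_{C^0}$ by $h^\theta \geq s_0^\theta$ to reduce to Step 2. For $h < s_0$, choose the unique $N \geq 0$ with $s_{N+1} \leq h < s_N$ and split
\[ |\nabla^k F(x) - \nabla^k F(y)| \leq \sum_{j \leq N} |\nabla^k g_j(x) - \nabla^k g_j(y)| + \sum_{j > N} |\nabla^k g_j(x) - \nabla^k g_j(y)|. \]
For $j \leq N$ (so $s_j \geq h$), the mean value theorem applied to $\nabla^k g_j$ bounds each term by $h\,|\nabla^{k+1} g_j|_{C^0} \MP h s_j^{\alpha-k-1}$; for $j > N$ the trivial bound yields $2|\nabla^k g_j|_{C^0} \MP s_j^{\alpha-k}$.

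Fourth, evaluate the two geometric series. Since $\alpha - k > \theta$, the second is dominated by its leading term and contributes $\sum_{j>N} s_j^{\alpha-k} \MP s_{N+1}^{\alpha-k}/(1-\delta^{\alpha-k}) \MP h^{\alpha-k}/\theta$. The first is handled by separating the sign of $\alpha - k - 1$: if $\alpha < k+1$ it is dominated by its last term at $j = N$ (with $s_N \approx h$), while if $\alpha \geq k+1$ it is dominated by its first term; applying again $1-\delta^x \geq c\min(x,1)$ one gets in both regimes a contribution $\MP h^{\alpha-k}/(1-\theta)$ (the second regime is comfortable since $1/(1-\theta) \geq 1$). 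The key algebraic identity
\[ h^{\alpha-k} = h^\theta \cdot h^{\alpha-\beta} \leq h^\theta s_0^{\alpha-\beta}, \]
which uses $h \leq s_0$ and $\alpha \geq \beta$, extracts the factor $s_0^{\alpha-\beta}$; dividing by $h^\theta$ produces $[\nabla^k F]_{C^\theta} \MP s_0^{\alpha-\beta}/(\theta(1-\theta))$, which combined with Step 2 completes the bound on $|F|_\beta$.

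The main obstacle is the bookkeeping of the geometric-series denominators: one must apply $1 - \delta^x \geq c\min(x,1)$ to each of the critical exponents $\alpha - m$, $\alpha - k$ and $k+1-\alpha$ in order to isolate the factors $1/\theta$ and $1/(1-\theta)$, while treating uniformly the two sub-regimes $\alpha < k+1$ and $\alpha \geq k+1$ for the first sum (the latter is the one arising in the application to the paper, where $\alpha = l > 2\nu$ and $\alpha - k$ stays well above $1$, so the dependence on $\alpha$ is harmlessly absorbed in the $n, \tau, l$ dependence of $\MP$).
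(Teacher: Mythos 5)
The paper cites this as a standard converse approximation result and does not supply a proof, so I can only assess your argument on its own merits. The overall architecture is the right one (telescope $F=\sum_j g_j$, Cauchy estimates for $\nabla^m g_j$, dyadic split at $s_N\approx h$ for the H\"older seminorm), and Steps 1--3 are sound. The difficulty is in Step 4, in the evaluation of the first series $h\sum_{j\le N}s_j^{\alpha-k-1}$ when $\alpha<k+1$. The exponent governing that geometric sum is $k+1-\alpha$, not $1-\theta$: it is dominated by the term $j=N$ and the geometric factor is $(1-\delta^{k+1-\alpha})^{-1}\MP(k+1-\alpha)^{-1}$, giving a contribution $\MP h^{\alpha-k}/(k+1-\alpha)$. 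Since $\beta\le\alpha$ forces $\alpha-k\ge\theta$, i.e. $k+1-\alpha\le 1-\theta$, one has $(k+1-\alpha)^{-1}\ge(1-\theta)^{-1}$, so the bound $\MP h^{\alpha-k}/(1-\theta)$ you announce is strictly \emph{stronger} than what the series yields; as $\alpha\to(k+1)^{-}$ with $\theta$ fixed the true factor blows up while $(1-\theta)^{-1}$ stays bounded. The case $\alpha\ge k+1$ has a similar slip: the leading term of that series is $h\,s_0^{\alpha-k-1}$, not $h^{\alpha-k}$, with denominator $\min(\alpha-k-1,1)$, which again need not be comparable to $1-\theta$.

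The repair is small and restores the exact constant of the statement: extract the factor $s_j^{\alpha-\beta}\le s_0^{\alpha-\beta}$ from each term \emph{before} summing, writing $s_j^{\alpha-k-1}\le s_0^{\alpha-\beta}s_j^{\theta-1}$ and $s_j^{\alpha-k}\le s_0^{\alpha-\beta}s_j^{\theta}$. This reduces everything to the borderline case $\beta=\alpha$; the two remaining geometric series have exponents exactly $\theta-1<0$ and $\theta\in(0,1)$, so $(1-\delta^{1-\theta})^{-1}$ and $(1-\delta^{\theta})^{-1}$ produce the factors $(1-\theta)^{-1}$ and $\theta^{-1}$ directly, with the prefactor $s_0^{\alpha-\beta}$ already in place and no case distinction on the sign of $\alpha-k-1$. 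Your ``key algebraic identity'' $h^{\alpha-k}\le h^{\theta}s_0^{\alpha-\beta}$ performs this extraction only after the series have been summed, which is too late to control the geometric denominators. The same substitution should also be made in Step 2 (replacing $\alpha-m$ by $\beta-m\ge\theta$). With this modification the proof is complete, and the dependence of the implicit constant on $\alpha$ that remains (e.g. through $m!\le([\alpha]+1)!$ in the Cauchy estimates) is indeed harmless for the paper's applications, as you note.
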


We point out that using the estimates~\eqref{estweight1} and~\eqref{estweight2} one could proceed as in~\cite{Pos01} and easily obtain, on appropriate domains, estimates such as
\begin{equation}\label{weak}
|W_0(\mathcal{F}^{j+1}-\mathcal{F}^{j})| \MP |W_j(\mathcal{F}_{j+1}-\mathrm{Id})|\MP s_j^{l-\lambda-\nu}. 
\end{equation}
However, in view of Proposition~\ref{convsmoothing}, such estimates are not sufficient for our purpose. The point is that the simple argument from~\cite{Pos01} using these weight matrices do not take into account the structure of the transformation (estimating the size of weight matrices as in~\eqref{estweight2} is actually very pessimistic), and the latter will be important for us in the proof of the convergence. This is why we decomposed $\mathcal{F}=(\Phi,\varphi)=(E,F,G,\varphi)$, and we will now proceed as in~\cite{BF17} to prove better estimates than those in~\eqref{weak}.

\begin{proof}[Proof of Theorem~\ref{thm2}]
Let us denote by 
\[ \mathcal{F}^{j+1}=(\Phi^{j+1},\varphi^{j+1})=(U^{j+1},V^{j+1},\varphi^{j+1})=(E^{j+1},F^{j+1},G^{j+1},\varphi^{j+1}) \]
and
\[ \Gamma^{j+1}=G^{j+1} \circ (U^{j+1})^{-1}. \]
We claim that the estimates~\eqref{estim2iter} imply that
\begin{equation}\label{aprouver}
\begin{cases}
 |\varphi^{j+1}-\varphi^j|_{j+1} \MP s_j^{l-\lambda}, \quad |E^{j+1}-E^j|_{j+1} \MP s_j^{l-\lambda-\tau}, \quad |F^{j+1}-F^j|_{j+1} \MP s_j^{l-\lambda-\nu}, \\ 
|G^{j+1}-G^j|_{j+1} \MP s_j^{l-\lambda-\tau}, \quad |\Gamma^{j+1}-\Gamma^j|_{j+1} \MP s_j^{l-\nu}.
\end{cases}
\end{equation}
Let us first assume~\eqref{aprouver}, and show how to conclude the proof. Since the open complex domains $V_{h_j}$ shrink to the closed set $\Omega_{1,\tau}$, the first inequality of~\eqref{aprouver} show that $\varphi^j$ converges to a uniformly continuous map $\varphi : \Omega_{1,\tau} \rightarrow \tilde{\Omega}$.  Then since $l-\lambda-\tau>1$, it follows from the second inequality of~\eqref{aprouver} and Proposition~\ref{convsmoothing} that for a fixed $\omega \in \Omega_{1,\tau}$, $E_\omega^j$ converges to a $C^1$ (in fact $C^r$, for any real non-integer $r$ such that $1<r<l-\lambda-\tau$) map $E_\omega : \T^n \rightarrow B_1$ such that $U_\omega=\mathrm{Id}+E_\omega$ is a diffeomorphism of $\T^n$ and
\begin{equation}\label{est1}
|U_\omega-\mathrm{Id}|_1=|E_\omega|_1 \MP s_0^{l-\lambda-\nu} \MP \varepsilon^{(l-\lambda-\nu)/l}.
\end{equation}
Similarly, since $l-\lambda-\tau>1$ and $l-\nu>\nu=\tau+1$, the last two inequalities of~\eqref{aprouver} and Proposition~\ref{convsmoothing} imply that for a fixed $\omega \in \Omega_{1,\tau}$, $G_\omega^j$ and $\Gamma_\omega^j$ converge respectively to a $C^1$ and $C^{\tau+1}$ maps $G_\omega : \T^n \rightarrow B_1$ and $\Gamma_\omega : \T^n \rightarrow B_1$ such that  
\begin{equation}\label{est2}
|G_\omega|_{1} \MP s_0^{l-\lambda-\nu} \MP \varepsilon^{(l-\lambda-\nu)/l}, \quad |\Gamma_\omega|_{\tau+1} \MP s_0^{l-2\nu} \MP \varepsilon^{(l-2\nu)/l}.
\end{equation}
We can eventually define
\[ \Psi_\omega=(U_\omega,G_\omega) : \T^n \rightarrow \T^n \times B_1 \]
which is the limit of $\Psi_\omega^{j+1}=(U_\omega^{j+1},G_\omega^{j+1})$. On account of~\eqref{estim1iter} we have
\begin{equation}\label{convtrans1}
|H_j \circ \mathcal{F}^{j+1}-N_j|_{j+1} \leq s_{j+1}^l 
\end{equation}
where $H_j=N+P_j$, which, evaluated at $I=0$, implies in particular that
\[ (H_j)_{\varphi^{j+1}(\omega)} \circ \Psi_\omega^{j+1} \]
converges, as $j$ goes to infinity, to a constant. By Proposition~\ref{smoothing}, $P_j$ converges to $P$ in the $C^1$ topology, so in particular $H_j$ converges uniformly to $H$ and thus
\[ \Psi_\omega(\T^n)=\{(\theta,\Gamma_\omega(\theta) \; | \; \theta \in \T^n)\} \]
is an embedded Lagrangian torus invariant by the flow of $H_{\varphi(\omega)}$. Moreover, the inequality~\eqref{convtrans1}, together with Cauchy inequality (using $l>\lambda$) and the symplectic character of $\Phi_\omega^{j+1}$, implies that
\[ (\nabla \Psi_\omega^{j+1})^{-1}X_{(H_j)_{\varphi^{j+1}(\omega)}} \circ \Psi_\omega^{j+1} \] 
converges, as $j$ goes to infinity, to the vector $\omega$. Again, the $C^1$ convergence of $P_j$ to $P$ implies the uniform convergence of $X_{H_j}$ to $X_H$, and the latter means that at the limit we have
\[ X_{H_{\varphi(\omega)}} \circ \Psi_\omega=\nabla \Psi_\omega\cdot \omega. \]
With the estimates~\eqref{est1} and~\eqref{est2}, this proves the first part of the statement.

To prove the second part, we just observe that~\eqref{aprouver} together with a Cauchy estimate gives
\[ |\varphi^{j+1}-\varphi^j|_{j+1} \MP s_j^{l-\lambda-\nu} \PM 1. \]
It follows that $\varphi$ is a limit of uniform Lipschitz functions, and so it is Lipschitz with 
\[ \mathrm{Lip}(\varphi-\mathrm{Id}) \MP s_0^{l-\lambda-\nu} \MP \varepsilon^{(l-\lambda-\nu)/l}.   \]
Similarly, from~\eqref{aprouver} and a Cauchy estimate we have
\[ |\nabla_\omega\Gamma^{j+1} - \nabla _\omega \Gamma^j|_{j+1}^* \MP s_j^{l-2\nu} \PM 1\] 
and thus $\Gamma$ is Lipschitz with respect to $\omega$ with 
\[ \mathrm{Lip}(\Gamma) \MP s_0^{l-2\nu} \MP \varepsilon^{(l-2\nu)/l}.\]  
This gives the second part of Theorem~\ref{thm2}, and now it remains to prove the claim~\eqref{aprouver}. 

To simplify the notations, we will not indicate the domain on which the supremum norms are taken, as this should be clear from the context. Let us denote
\[ \Delta_{j+1}(\theta,\omega)=(U_{j+1}(\theta),\varphi_{j+1}(\omega))=(\theta+E_{j+1}(\theta,\omega),\varphi_{j+1}(\omega)) \]
which, in view of~\eqref{estim2iter}, satisfy
\begin{equation}\label{deltaj}
\begin{cases}
|\Pi_\theta\Delta_{j+1}-\mathrm{Id}|=|E_{j+1}| \MP s_j^{l-\lambda-\tau}, \quad |\nabla_\theta(\Pi_\theta\Delta_{j+1}-\mathrm{Id})| \MP s_j^{l-\lambda-\nu}  \\
|\Pi_\omega\Delta_{j+1}-\mathrm{Id}|=|\varphi_{j+1}-\mathrm{Id}|\MP s_j^{l-\lambda}, \quad |\nabla_\omega(\Pi_\omega\Delta_{j+1}-\mathrm{Id})| \MP s_j^{l-\lambda-\nu}. 
\end{cases}
\end{equation}
Recalling that
\[ \mathcal{F}^{j+1}=(E^{j+1},F^{j+1},G^{j+1},\varphi^{j+1})  \]
is of the form $\mathcal{F}^{j+1}=\mathcal{F}^{j} \circ \mathcal{F}_{j+1}$, with
\[ \mathcal{F}_{j+1}=(\Phi_{j+1},\varphi_{j+1})=(E_{j+1},F_{j+1},G_{j+1},\varphi_{j+1}) \] 
we have the following inductive expressions:
\begin{equation}\label{expressions}
\begin{cases}
\varphi^{j+1}=\varphi^j \circ \varphi_{j+1}, \\
E^{j+1}=E_{j+1}+E^j \circ \Delta_{j+1} \\
F^{j+1}=(\mathrm{Id}+F^j \circ \Delta_{j+1}).F_{j+1}+F^j\circ \Delta_{j+1} \\
G^{j+1}=(\mathrm{Id}+F^j \circ \Delta_{j+1}).G_{j+1}+G^j\circ \Delta_{j+1}.
\end{cases}
\end{equation}
In the sequel, we shall make constant use of the estimates~\eqref{estim2iter}. Let us first prove the estimate for $\varphi^{j+1}$, which is the simplest. A straightforward induction gives
\[ |\nabla \varphi^{j}| \MP \prod_{i=0}^{j-1}(1+s_i^{l-\lambda-\nu}) \MP 1 \] 
and together with 
\[ \varphi^{j+1}-\varphi^{j}=\left(\int_0^1\nabla \varphi^j \circ (t \varphi_{j+1}+(1-t)\mathrm{Id})dt\right)\cdot (\varphi_{j+1}-\mathrm{Id}) \]
one finds
\[ |\varphi^{j+1}-\varphi^{j}| \MP |\nabla \varphi^j| |\varphi_{j+1}-\mathrm{Id}|\MP s_j^{l-\lambda} \]
which is the first estimate of~\eqref{aprouver}. The estimate for $E^{j+1}$ is slightly more complicated. Let us introduce
\[ \hat{\Delta}_{j+1}(\theta,\omega)=(\theta,\varphi_{j+1}(\omega)) \]
and we split
\begin{equation}\label{split}
E^{j+1}-E^{j}=(E^{j+1}-E^j \circ \hat{\Delta}_{j+1})+(E^j \circ \hat{\Delta}_{j+1}-E_j).
\end{equation}
The first summand in~\eqref{split} read
\[ E^{j+1}-E^j \circ \hat{\Delta}_{j+1}=E_{j+1}+E^j \circ \Delta_{j+1}-E^j \circ \hat{\Delta}_{j+1} \]
Using~\eqref{deltaj} one obtains by induction
\begin{equation*}
|\nabla_\theta E^j| \MP \sum_{i=0}^{j-1} s_i^{l-\lambda-\nu} \PM 1
\end{equation*}
and therefore
\[ |E^j \circ \Delta_{j+1}-E^j \circ \hat{\Delta}_{j+1}| \MP |\nabla_\theta E^j||E_{j+1}| \PM s_j^{l-\lambda-\tau} \]
and hence
\begin{equation}\label{E1}
|E^{j+1}-E^j \circ \hat{\Delta}_{j+1}| \MP s_j^{l-\lambda-\tau}. 
\end{equation}
For the second summand of~\eqref{split}, again by induction using~\eqref{deltaj} one obtains
\begin{equation*}
|\nabla_\omega E^j| \MP \sum_{i=0}^{j-1} s_i^{l-\lambda-\tau-\nu} \MP s_{j-1}^{-\tau}\sum_{i=0}^{j-1} s_i^{l-\lambda-\nu} \PM s_{j-1}^{-\tau}
\end{equation*}
since $s_i^{-\tau}\leq s_{j-1}^{-\tau}$ for $0 \leq i \leq j-1$, and hence
\begin{equation}\label{E2}
|E^j \circ \hat{\Delta}_{j+1}-E^j| \MP |\nabla_\omega E^j||\varphi_{j+1}-\mathrm{Id}| \MP s_{j-1}^{-\tau} s_j^{l-\lambda} \PM s_j^{l-\lambda-\tau} . 
\end{equation}
From~\eqref{split},~\eqref{E1} and~\eqref{E2} one arrives at
\[ |E^{j+1} -E^j| \MP s_j^{l-\lambda-\tau} \]
which is the second estimate of~\eqref{aprouver}. The estimate for $F^{j+1}$ is, again, similar to $E^{j+1}$ but more complicated. We use a similar splitting
\begin{equation}\label{split2}
F^{j+1}-F^{j}=(F^{j+1}-F^j \circ \hat{\Delta}_{j+1})+(F^j \circ \hat{\Delta}_{j+1}-F_j)
\end{equation}
and start with the first summand
\begin{equation}
F^{j+1}-F^j \circ \hat{\Delta}_{j+1}=(\mathrm{Id}+F^j \circ \Delta_{j+1})F_{j+1}+F^j \circ \Delta_{j+1}-F^j \circ \hat{\Delta}_{j+1}. 
\end{equation}
To estimate this term, we first prove, by induction using~\eqref{deltaj}, that
\[ |F^j| \MP \sum_{i=0}^{j-1} s_i^{l-\lambda-\nu} \PM 1 \]
which leads to
\[ |\mathrm{Id}+F^j \circ \Delta_{j+1}| \MP 1, \quad |F_{j+1}\nabla_\theta \Delta_{j+1}+\Delta_{j+1}| \PM 1. \]
A computation now gives
\[ \nabla_\theta F^{j+1}=(\mathrm{Id}+F^{j} \circ \Delta_{j+1})\nabla_\theta F_{j+1}+(F_{j+1}\nabla_\theta\Delta_{j+1}+\Delta_{j+1})\nabla_\theta F^j \]
and by induction, we can now claim that
\begin{equation*}
|\nabla_\theta F^j| \MP \sum_{i=0}^{j-1} s_i^{l-\lambda-\nu-1} \PM s_{j-1}^{-1}.
\end{equation*}
Proceeding as before, this leads to
\[ |F^j \circ \Delta_{j+1}-F^j \circ \hat{\Delta}_{j+1}| \MP |\nabla_\theta F^j||E_{j+1}| \MP s_{j-1}^{-1}s_j^{l-\lambda-\tau} \PM s_j^{l-\lambda-\nu} \]
and hence
\begin{equation}\label{F1}
|F^{j+1}-F^j \circ \hat{\Delta}_{j+1}| \MP s_j^{l-\lambda-\nu}. 
\end{equation}
For the second summand, a similar but slightly more involved computation and induction leads to
\begin{equation*}
|\nabla_\omega F^j| \MP \sum_{i=0}^{j-1} s_i^{l-\lambda-2\nu} \PM s_{j-1}^{-\nu}
\end{equation*} 
and hence
\begin{equation}\label{F2}
|F^j \circ \hat{\Delta}_{j+1}-F^j| \MP |\nabla_\omega F^j||\varphi_{j+1}-\mathrm{Id}| \PM s_j^{-\nu} s_j^{l-\lambda} \PM s_j^{l-\lambda-\nu} . 
\end{equation}
From~\eqref{split2},~\eqref{F1} and~\eqref{F2} we obtain
\[ |F^{j+1} -F^j| \MP s_j^{l-\lambda-\nu} \]
which is the third estimate of~\eqref{aprouver}. The estimate for $G^{j+1}$ follows from the exact same argument as the one used for $F^{j+1}$: one proves by induction that
\[ |\nabla_\theta G^j| \MP \sum_{i=0}^{j-1} s_i^{l-\nu-1} \PM 1, \quad |\nabla_\omega G^j| \MP \sum_{i=0}^{j-1} s_i^{\lambda-\nu} \PM 1  \]
which leads to
\begin{equation*}
|G^{j+1}-G^j \circ \hat{\Delta}_{j+1}| \PM s_j^{l-\lambda-\tau}
\end{equation*}
and
\begin{equation*}
|G^j \circ \hat{\Delta}_{j+1}-G^j| \PM s_j^{l-\lambda} \PM s_j^{l-\lambda-\tau} 
\end{equation*}
which combined give the fourth estimate of~\eqref{aprouver}. To prove the last estimate, observe that the inductive expression for $E^{j+1}$ gives
\[ U^{j+1}=U^j \circ \Delta_{j+1} \]
and with the inductive expression for $G^{j+1}$ this leads to
\[ G^{j+1} \circ (U^{j+1})^{-1}=(\mathrm{Id}+F^j \circ \Delta_{j+1}).G_{j+1} \circ (U^{j+1})^{-1}+G^j\circ (U^{j})^{-1} \]
and therefore
\[ \Gamma^{j+1}-\Gamma^j=(\mathrm{Id}+F^j \circ \Delta_{j+1}).G_{j+1} \circ (U^{j+1})^{-1}. \]
Since the image of $(U_{j+1})^{-1}$ is contained in $\mathcal{V}_{j+1}^*$, the same holds true for $(U^{j+1})^{-1}$ and therefore
\[ |G_{j+1} \circ (U^{j+1})^{-1}| \leq |G_{j+1}|^* \MP s_j^{l-\nu} \]
which gives
\[ |\Gamma^{j+1}-\Gamma^j| \MP s_j^{l-\nu}.  \]
This concludes the proof of the claim, and hence finishes the proof of the theorem.
\end{proof}

\medskip

\textit{Acknowledgements.} This material is based upon work supported by the National Science Foundation under Grant No. 1440140, while the author was in residence at the Mathematical Sciences Research Institute in Berkeley, California, during the thematic program ``Hamiltonian systems, from topology to applications through analysis". It is a pleasure for me to thank Jacques F{\'e}joz for many fruitful conversations. I have also benefited from partial funding from the ANR project Beyond KAM. 

\addcontentsline{toc}{section}{References}
\bibliographystyle{amsalpha}
\bibliography{KAMmesCl}

\providecommand{\bysame}{\leavevmode\hbox to3em{\hrulefill}\thinspace}
\providecommand{\MR}{\relax\ifhmode\unskip\space\fi MR }
\providecommand{\MRhref}[2]{%
  \href{http://www.ams.org/mathscinet-getitem?mr=#1}{#2}
}
\providecommand{\href}[2]{#2}
\begin{thebibliography}{Mos70}

\bibitem[Alb07]{Alb}
J.~Albrecht, \emph{On the existence of invariant tori in nearly-integrable
  {H}amiltonian systems with finitely differentiable perturbations}, Regul.
  Chaotic Dyn. \textbf{12} (2007), no.~3, 281--320.

\bibitem[Arn63]{Arn63a}
V.I. Arnol'd, \emph{{Proof of a theorem of {A}.{N}. Kolmogorov on the
  invariance of quasi-periodic motions under small perturbations}}, Russ. Math.
  Surv. \textbf{18} (1963), no.~5, 9--36.

\bibitem[BF17]{BF17}
A.~Bounemoura and J.~F{\'e}joz, \emph{{KAM}, $\alpha$-{G}evrey regularity and
  the $\alpha$-{B}runo-{R}{\"u}ssmann condition}, Ann. Sc. Norm. Sup. Pisa
  (2017), to appear.

\bibitem[CW13]{CW13}
C.-Q. Cheng and L.~Wang, \emph{Destruction of {L}agrangian torus for positive
  definite {H}amiltonian systems}, Geom. Funct. Anal. \textbf{23} (2013),
  no.~3, 848--866.

\bibitem[Her86]{Her86}
M.-R. Herman, \emph{Sur les courbes invariantes par les difféomorphismes de
  l'anneau}, Vol. 2. (French) With a correction to: On the curves invariant
  under diffeomorphisms of the annulus, Vol. 1 (French), Astérisque No. 144,
  248 pp., 1986.

\bibitem[Kol54]{Kol54}
A.N. Kolmogorov, \emph{On the preservation of conditionally periodic motions
  for a small change in {H}amilton's function}, Dokl. Akad. Nauk. SSSR
  \textbf{98} (1954), 527--530.

\bibitem[Laz73]{Laz73}
V.~F. Lazutkin, \emph{Existence of caustics for the billiard problem in a
  convex domain}, Izv. Akad. Nauk SSSR Ser. Mat. \textbf{37} (1973), 186--216.

\bibitem[Mos62]{Mos62}
J.~Moser, \emph{On {I}nvariant curves of {A}rea-{P}reserving {M}appings of an
  {A}nnulus}, Nachr. Akad. Wiss. Göttingen \textbf{II} (1962), 1--20.

\bibitem[Mos70]{Mos70}
\bysame, \emph{On the construction of almost periodic solutions for ordinary
  differential equations}, Proc. {I}nternat. {C}onf. on {F}unctional {A}nalysis
  and {R}elated {T}opics ({T}okyo, 1969), 1970, pp.~60--67.

\bibitem[Pop04]{Pop04}
G.~Popov, \emph{K{AM} theorem for {G}evrey {H}amiltonians}, Erg. Th. Dyn. Sys.
  \textbf{24} (2004), no.~5, 1753--1786.

\bibitem[Pös80]{Pos80}
J.~Pöschel, \emph{Über invariante tori in differenzierbaren {H}amiltonschen
  systemen}, Bonn. Math. Schr. \textbf{120} (1980).

\bibitem[Pös82]{Pos82}
\bysame, \emph{Integrability of {H}amiltonian systems on {C}antor sets}, Comm.
  Pure Appl. Math. \textbf{35} (1982), no.~5, 653--696.

\bibitem[Pös01]{Pos01}
\bysame, \emph{A lecture on the classical {KAM} theory}, Katok, Anatole (ed.)
  et al., Smooth ergodic theory and its applications (Seattle, WA, 1999).
  Providence, RI: Amer. Math. Soc. (AMS). Proc. Symp. Pure Math. 69, 707-732,
  2001.

\bibitem[Rüs01]{Rus01}
H.~Rüssmann, \emph{Invariant tori in non-degenerate nearly integrable
  {H}amiltonian systems}, Regul. Chaotic Dyn. \textbf{6} (2001), no.~2,
  119--204.

\bibitem[Sal04]{Sal04}
D.A. Salamon, \emph{The {K}olmogorov-{A}rnold-{M}oser theorem}, Mathematical
  Physics Electronic Journal \textbf{10} (2004), 1--37.

\bibitem[Zeh75]{Zeh75}
E.~Zehnder, \emph{Generalized implicit function theorems with applications to
  some small divisor problems. {I}}, Comm. Pure. Appl. Math. \textbf{28}
  (1975), 91--140.

\end{thebibliography}

\end{document}